\documentclass[12pt]{amsart}
\usepackage[a4paper,hmargin=3cm,vmargin=3cm]{geometry}
\usepackage{amsmath}
\usepackage{amsthm}
\usepackage{amssymb}

\theoremstyle{plain}
\newtheorem{theorem}{Theorem}[section]
\newtheorem{lemma}[theorem]{Lemma}
\newtheorem{proposition}[theorem]{Proposition}
\newtheorem{definition}[theorem]{Definition}
\newtheorem{corollary}[theorem]{Corollary}

\theoremstyle{remark}

\usepackage{xcolor}

\usepackage[
backend=biber,
style=alphabetic,
sorting=ynt
]{biblatex}

\addbibresource{main.bib}
\addbibresource{zotero_refs.bib}

\DeclareMathOperator{\Var}{Var}
\DeclareMathOperator{\E}{\mathbb{E}}

\newcommand{\EN}{\mathbb{E}_N}
\newcommand{\PN}{\mathbb{P}_N}

\newcommand{\beforesubmit}[1]{}

\begin{document}

\title{Fluctuations of the free energy of the mixed $p$-spin mean field spin glass model}

\author{Debapratim~Banerjee, David~Belius}

\maketitle

\begin{abstract}
    We prove the convergence in distribution of the fluctuations of the free energy of the mixed $p$-spin Sherrington-Kirkpatrick model with non-vanishing $2$-spin component at high enough temperature. The limit is Gaussian, and the fluctuations are seen to arise from weighted cycle counts in the complete graph on the spin indices weighted by the $2$-spin interaction matrix.
\end{abstract}

\section{Introduction}
The Sherrington-Kirkpatrick \cite{SKSolvableModelOfASpinGlass} model and its variants \cite{derrida_random-energy_1980, gross_simplest_1984,talagrand_multiple_2000}  are important models of disordered magnetic systems and paradigmatic examples in the theory of complex systems \cite{MezardParisiVirasoro-SpinGlassTheoryandBeyond,talagrand2010meanvol1, talagrand2011meanvol2,PanchenkoTheSKModel}.

An important step in the solution of the model is the computation of the \emph{free energy}. The Parisi formula \cite{parisi1980sequence,parisi_infinite_1979,GuerraBrokenReplicaSymmetryBounds,TalagrandTheParisiFormula,PanchenkoTheParisiUltrametricityConjecture} gives the typical value of the free energy in the form of a law of large numbers. This article studies the fluctuations of free energy around its typical value. We prove that the distribution of the fluctuations for mixed $p$-spin Sherrington-Kirkpatrick models without external field at high enough temperature are asymptotically Gaussian, provided a non-vanishing $2$-spin component is present. We achieve this by proving an estimate for the free energy which is sharp to subleading order, where the subleading term arises from certain cycle counts in the complete weighted graph defined by the disorder matrix of the $2$-spin component. Thus the fluctuations of the free energy can be understood as arising from the fluctuations of these cycle counts.

To formally state our results let
\[
\xi\left(x\right)=\sum_{p\ge2}\alpha_{p}x^{p},
\]
be a non-zero power series with radius of convergence greater than one with $\alpha_{p}\ge0$ for all $p$, called the \emph{mixture}. Let $H_{N}\left(\sigma\right)$
be a centered Gaussian process on the sphere $\{ \sigma \in \mathbb{R}^n: |\sigma| = \sqrt{N}\}$ with covariance
\begin{equation}\label{eq: hamilt covar}
\mathbb{E}_{N}\left[H_{N}\left(\sigma\right)H_{N}\left(\sigma'\right)\right]=N\xi\left(\frac{\sigma\cdot\sigma'}{N}\right),
\end{equation}
called a \emph{mixed $p$-spin Hamiltonian} with mixture $\xi$.
The left-hand side is a well-defined covariance  function on the sphere for any $\xi$ by Schonenberg's theorem \cite{schoenberg1942positive}, and can be explicitly constructed as a polynomial with Gaussian random coefficients, see e.g. \cite[(1.12)-(1.15)]{PanchenkoTheSKModel}.

Let $E$ be the uniform measure on $\{\pm1\}^N$ and let the free energy be given by
\begin{equation}\label{eq: free energy def}
F_N = \log Z_N \text{, where }Z_N = E\left[\exp\left(\beta H_N(\sigma)\right)\right],
\end{equation} 
for an inverse temperature $\beta \ge 0$.
Let $\mathcal{N}(\mu,\sigma^2)$ denote the normal distribution with mean $\mu$ and variance $\sigma^2$ and write $\overset{D}{\to}$ for convergence in distribution. We have the following result for the fluctuation of the free energy:
\begin{theorem}
\label{thm: main thm}For any $\xi$ with $\alpha_2 > 0$ there exists a $\beta_{\xi} \in (0,\frac{1}{\sqrt{2\alpha_2}}]$
such that if $0<\beta<\beta_{\xi}$ then letting
$$ s^2= -\frac{1}{2}\log\left(1-2\alpha_2\beta^{2}\right),$$
we have
\begin{equation}\label{eq: FE conv in law}
F_{N}-N\frac{\beta^{2}}{2}\xi(1) \overset{D}{\to}\mathcal{N}\left(-\frac{1}{2}s^2,s^2\right),
\end{equation}
as $N\to\infty$. If $\xi(x)=\alpha_2 x^2$ then $\beta_{\xi} = \frac{1}{\sqrt{2\alpha_2}}$.
\end{theorem}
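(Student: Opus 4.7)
The plan is to reduce to the pure 2-spin case $\xi(x)=\alpha_{2}x^{2}$ and then analyze it via the classical Ising high-temperature expansion combined with the central limit theorem for cycle statistics of a GOE matrix. Since the limiting mean $-s^{2}/2$ and variance $s^{2}$ depend only on $\alpha_{2}$, the fluctuations should come entirely from the 2-spin part of the Hamiltonian. Writing $H_{N}=H_{N}^{(2)}+H_{N}^{(\ge 3)}$ as a sum of independent Gaussian processes, a conditional second-moment computation (conditioning on $H_{N}^{(2)}$) reduces the ratio $\EN[Z_{N}^{2}\mid H_{N}^{(2)}]/\EN[Z_{N}\mid H_{N}^{(2)}]^{2}$ to a two-replica expectation of $\exp(\beta^{2}N(\xi(R)-\alpha_{2}R^{2}))$ under the 2-spin Gibbs measure; at high temperature overlaps $R=\sigma\cdot\tau/N$ concentrate on order $1/\sqrt{N}$ while $\xi(r)-\alpha_{2}r^{2}=O(r^{3})$, so this ratio tends to $1$ and we obtain $Z_{N}=(1+o_{P}(1))\,e^{N\beta^{2}(\xi(1)-\alpha_{2})/2}\,Z_{N}^{(2)}$ for $\beta$ small.

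For the pure 2-spin model, write $\beta H_{N}(\sigma)=c_{N}\sigma^{\top}J\sigma$ with $J$ a GOE matrix and $c_{N}=\beta\sqrt{\alpha_{2}/(2N)}$, and apply the identity $e^{x\sigma_{i}\sigma_{j}}=\cosh(x)(1+\sigma_{i}\sigma_{j}\tanh(x))$ pairwise to factor
\[
Z_{N}^{(2)}=e^{c_{N}\mathrm{tr}(J)}\prod_{i<j}\cosh(2c_{N}J_{ij})\cdot\sum_{S\text{ even}}\prod_{(i,j)\in S}\tanh(2c_{N}J_{ij}),
\]
where the last sum runs over subgraphs $S$ of $K_{N}$ in which every vertex has even degree. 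Taking logarithms: the product of hyperbolic cosines expands via $\log\cosh(x)=x^{2}/2-x^{4}/12+\cdots$ to yield the leading deterministic term $N\beta^{2}\alpha_{2}/2$ together with $O(1)$ fluctuations, and the even-subgraph sum admits a convergent cluster expansion at small $\beta$ whose logarithm is a sum over connected even subgraphs, dominated by simple cycles of each length $k\ge 3$.

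The analytic heart is then the joint CLT for the normalized cycle sums
\[
\widetilde C_{k}=N^{-k/2}\sum_{i_{1},\ldots,i_{k}\text{ distinct}}J_{i_{1}i_{2}}J_{i_{2}i_{3}}\cdots J_{i_{k}i_{1}},\qquad k\ge 1,
\]
of the GOE matrix, a classical random matrix result (Sinai--Soshnikov; see also Anderson--Guionnet--Zeitouni) giving joint convergence to independent centered Gaussians with $\Var(\widetilde C_{k})\to 2k$. The individual contributions in the expansion then yield: from $c_{N}\mathrm{tr}(J)$ a Gaussian of variance $(2\alpha_{2}\beta^{2})/2$; from the fluctuation of $\sum_{i<j}2c_{N}^{2}J_{ij}^{2}$ a Gaussian of variance $(2\alpha_{2}\beta^{2})^{2}/4$; and from cycles of length $k\ge 3$ independent Gaussians of variance $(2\alpha_{2}\beta^{2})^{k}/(2k)$. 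Summing over all $k\ge 1$ gives the total variance
\[
\sum_{k\ge 1}\frac{(2\alpha_{2}\beta^{2})^{k}}{2k}=-\tfrac{1}{2}\log(1-2\alpha_{2}\beta^{2})=s^{2},
\]
matching the claim. The mean $-s^{2}/2$ is then pinned down by the log-normal consistency $\EN[Z_{N}/\EN[Z_{N}]]=1$ together with uniform integrability, or equivalently emerges from the $-x^{2}/2$ corrections in $\log(1+x)$ throughout the cluster expansion.

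The main obstacles I foresee are: (i) truncation of the cycle expansion in $k$---one must control the tail $\sum_{k>K}$ uniformly in $N$ and interchange the $N,K\to\infty$ limits, which is where the assumption $\beta<\beta_{\xi}$ enters to keep the cluster expansion convergent; (ii) making the reduction to the 2-spin model rigorous at the level of distributional convergence, since errors must be $o(1)$ rather than $o(N)$; and (iii) the bookkeeping required to identify the mean $-s^{2}/2$ precisely, as it arises from combining several $O(1)$ deterministic corrections across the various pieces of the expansion.
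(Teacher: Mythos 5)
Your proposal is a genuinely different route from the paper's. You reduce to the pure $2$-spin model and then run the Aizenman--Lebowitz--Ruelle programme: the Ising high-temperature identity $e^{x\sigma_i\sigma_j}=\cosh(x)(1+\sigma_i\sigma_j\tanh(x))$, the even-subgraph expansion, and a cluster expansion of the logarithm dominated by simple cycles. The paper instead never expands $\log Z_N^{(2)}$ at all: it treats $\hat Z_N=Z_N/\EN[Z_N]$ as a Radon--Nikodym derivative, computes the law of the cycle counts under the tilted measure $\mathbb{Q}_N$, and identifies $\hat Z_N$ with $\exp\bigl(\sum_k\{ \mu_k C_{N,k}/(2k)-\mu_k^2/(4k)\}\bigr)$ by an $L^2$/conditional-expectation argument on a subsequential limit space. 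What your route buys is a very concrete combinatorial picture (and, for the pure $2$-spin model, ALR showed it works up to the critical temperature); what it costs is the full technical weight of the cluster expansion. Your bookkeeping of the variances ($\alpha_2\beta^2$ from $\mathrm{tr}(J)$, $(2\alpha_2\beta^2)^2/4$ from the $\log\cosh$ fluctuation, $(2\alpha_2\beta^2)^k/(2k)$ from $k$-cycles, summing to $s^2$) is correct, and pinning the mean via $\EN[\hat Z_N]=1$ plus uniform integrability (which follows from the $L^2$ bound) is a legitimate shortcut.

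Two gaps need attention. First, the cluster expansion step is asserted, not supplied: showing that connected even subgraphs other than simple cycles contribute $o_P(1)$ to $\log$ of the even-subgraph sum, and that the tail over long cycles can be truncated uniformly in $N$, is the technical heart of ALR and cannot be waved through; note also that the paper proves the cycle CLT with distinct indices itself (Proposition \ref{prop: cycle counts under P}), since the classical trace CLTs of Sinai--Soshnikov/AGZ have a different covariance structure and do not give asymptotic independence across $k$. Second, your reduction to the $2$-spin model via the quenched ratio $\EN[Z_N^2\mid H_N^{(2)}]/\EN[Z_N\mid H_N^{(2)}]^2=\langle e^{\beta^2N\tilde\xi(R)}\rangle$ is not justified by "overlaps are of order $1/\sqrt N$": you need exponential-tail control of $R$ under the \emph{random} two-replica Gibbs measure of the $2$-spin model, which is itself nontrivial. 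A cleaner fix is the unconditional $L^2$ identity $\EN[(\hat Z_N-\EN[\hat Z_N\mid J])^2]=\EN[\hat Z_N^2]-\EN[\EN[\hat Z_N\mid J]^2]\to c_\xi-c_\xi=0$, using the paper's Lemma \ref{lem: sec mom} for both the mixed and the pure models; this is exactly where the restriction $\beta<\beta_\xi$ enters, and it also shows that for general mixtures your argument cannot reach beyond the second-moment threshold even though the $2$-spin cluster expansion alone would go further.
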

The leading order term in \eqref{eq: FE conv in law} comes from the annealed partition function 
\begin{equation}\label{eq: ann Z}
\EN[Z_N] = \exp\left( N\frac{\beta^2}{2}\xi(1) \right).
\end{equation}
Theorem \ref{thm: main thm} will follow from a precise estimate of $\log Z_N$ to subleading order. To state it we construct the \emph{$2$-spin component} $H^2_N$ by setting
\begin{equation}\label{eq: HN2 def}
H_{N}^{2}\left(\sigma\right)= \frac{1}{\sqrt{2N}} \sum_{i,j}J_{ij}\sigma_{i}\sigma_{j},
\end{equation}
for $J=(J_{ij})_{i,j=1,\ldots,N}$ a GOE random matrix, i.e.
\begin{equation}\label{eq: J tilde law}
    J_{ii} \sim \mathcal{N}(0,2)\text{ for all }i, \, J_{ji} = J_{ij} \sim \mathcal{N}(0,1) \text{ for } i\ne j
    ,\, J_{ij} \text{ independent for }i\le j,
\end{equation}
defined under a probability $\PN$.
Note that then $\EN[H_N^2(\sigma)H_N^2(\sigma')] = N(\sigma \cdot \sigma'/N)^2$. We construct $H_N(\sigma)$ by letting
\[
H_{N}\left(\sigma\right)=\sqrt{\alpha_2}H_{N}^{2}\left(\sigma\right)+\tilde{H}_{N}\left(\sigma\right),
\]
where $\tilde{H}_{N}\left(\sigma\right)$
is a centered Gaussian process in $\mathbb{R}^{N}$, independent of $H_N^2$ and also defined under $\PN$, with covariance $\EN\left[\tilde{H}_{N}\left(\sigma\right)\tilde{H}_{N}\left(\sigma'\right)\right]=N\tilde{\xi}\left(\frac{\sigma\cdot\sigma'}{N}\right)$
for $\tilde{\xi}\left(x\right)=\xi\left(x\right)-\alpha_2 x^{2}$. The process $H_N(\sigma)$ then satisfies \eqref{eq: hamilt covar}.

Having constructed the $2$-spin component $H_N^2$, consider now its centered weighted cycle counts
\begin{equation}\label{eq: cycle count def}
C_{N,k}=\frac{1}{N^{\frac{k}{2}}}\sum_{i_0,\ldots,i_{k-1}\text{ distinct}}J_{i_{0}i_{1}}\ldots J_{i_{k-1}i_{0}}
- (N-1)1_{k=2},\quad k=1,2,\ldots.
\end{equation}
The sum in \eqref{eq: cycle count def} has mean zero if $k\ne2$ since each summand is a product of distinct independent centered Gaussians, and if $k=2$ it equals $\sum_{i_0 \ne i_1} J_{i_0i_1}^2$ which has mean $N(N-1)$. Note that $C_{N,k}=0$ for $k>N$.

Finally letting
\begin{equation}\label{eq: muk def}
\mu_{k}=\left( \sqrt{2\alpha_2}\beta\right)^{k},k =1,2,\ldots,
\end{equation}
and writing $\overset{P}{\to}$ for convergence in probability we have the following precise estimate for the free energy:
\begin{theorem}
\label{thm: sec thm}For any $\xi$ with $\alpha_2 > 0$ there exists a $\beta_{\xi} \in (0,\frac{1}{\sqrt{2 \alpha_2}}]$
such that if $0<\beta<\beta_{\xi}$ then
\begin{equation}\label{eq: FE estimate}
\left|F_{N}-N\frac{\beta^{2}}{2} \xi(1) - \sum_{k=1}^{\infty}\frac{2\mu_{k}C_{N,k}-\mu_{k}^{2}}{4k}\right|\overset{P}{\to}0,
\end{equation}
as $N\to\infty$. If $\xi(x)=\alpha_2 x^2$ then $\beta_{\xi} = \frac{1}{\sqrt{2\alpha_2}}$.
\end{theorem}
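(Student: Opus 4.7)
The plan is to prove \eqref{eq: FE estimate} in two stages: first reduce to the pure 2-spin model by conditioning, then combinatorially expand the 2-spin partition function and identify the leading random terms with the cycle counts.

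\textbf{Stage 1 (Reduction to the 2-spin case).} By the independence of $\tilde H_N$ and $H_N^2$ under $\PN$, conditioning on $H_N^2$ yields
\[
\EN[Z_N\mid H_N^2] = e^{N\beta^2\tilde\xi(1)/2}Z_N^{(2)}, \qquad Z_N^{(2)} := E[\exp(\beta\sqrt{\alpha_2}H_N^2(\sigma))],
\]
while the conditional second moment has a replica representation containing $\exp(N\beta^2\tilde\xi(R_{12}))$ with overlap $R_{12} = \sigma^1\cdot\sigma^2/N$. Since $\tilde\xi(x) = O(x^3)$ near zero, a high-temperature estimate showing that the 2-spin Gibbs measure concentrates the overlap at scale $N^{-1/2}$ (which must be proved separately and is what ultimately constrains $\beta < \beta_\xi$) makes the conditional second moment asymptotically equal to the squared conditional mean, giving $Z_N/\EN[Z_N\mid H_N^2] \overset P\to 1$ and hence $\log Z_N - \log Z_N^{(2)} - N\beta^2\tilde\xi(1)/2 \overset P\to 0$. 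Combined with $\xi(1) = \alpha_2 + \tilde\xi(1)$, this reduces \eqref{eq: FE estimate} to the corresponding statement for $\log Z_N^{(2)}$.

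\textbf{Stage 2 (Combinatorial expansion).} Split $H_N^2(\sigma)$ into diagonal and off-diagonal parts. Because $\sigma_i^2 = 1$, the diagonal part $\tfrac{1}{\sqrt{2N}}\sum_i J_{ii}$ factors out of the Gibbs average and contributes exactly $\tfrac{\mu_1}{2}C_{N,1}$ to $\log Z_N^{(2)}$, supplying the $k=1$ term. For the remaining off-diagonal partition function $W_N = E[\exp(b\sum_{i<j}J_{ij}\sigma_i\sigma_j)]$ with $b = \mu_1/\sqrt N$, I would use the Ising identity $e^{bJ_{ij}\sigma_i\sigma_j} = \cosh(bJ_{ij})(1 + \tanh(bJ_{ij})\sigma_i\sigma_j)$ together with $E[\prod_i\sigma_i^{d(i)}] = \prod_i\mathbf 1[d(i)\text{ even}]$ to obtain
\[
W_N = \prod_{i<j}\cosh(bJ_{ij})\sum_{S\in\mathcal E(K_N)}\prod_{(i,j)\in S}\tanh(bJ_{ij}),
\]
with $\mathcal E(K_N)$ the subgraphs of $K_N$ with all even degrees. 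Taylor-expanding $\tanh(bx) = bx + O(b^3)$ and $\log\cosh(bx) = b^2x^2/2 + O(b^4)$, applying the exponential formula to connected Eulerian components, and identifying the leading contribution with simple $k$-cycles for $k\ge 3$, the Eulerian sum's logarithm equals $\sum_{k\ge 3}\tfrac{\mu_k C_{N,k}}{2k} + o_P(1)$. Using $\sum_{i<j}J_{ij}^2 = \tfrac12(NC_{N,2} + N(N-1))$, the $\cosh$ product contributes $\log\EN W_N + \tfrac{\mu_2}{4}C_{N,2} - \tfrac{\mu_2^2}{8}$ after incorporating the deterministic $O(b^4)$ correction. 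The counterterms $-\mu_k^2/(4k)$ for $k\ge 3$ are recovered from the deterministic parts of the higher-order $\tanh$ expansion and of non-simple-cycle connected Eulerian subgraphs.

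\textbf{Moment control and main obstacle.} To make the expansion rigorous, I would compute moments directly: from Gaussian integration in $J$ together with the overlap identity $\mathrm{cov}(Y_N(\sigma),Y_N(\sigma')) = \alpha_2\beta^2(NR^2 - 1)$, one has $\EN[(Z_N^{(2)}/\EN Z_N^{(2)})^m] = E^{\otimes m}[\exp(\alpha_2\beta^2 N\sum_{\alpha<\beta}R_{\alpha\beta}^2)]$, which at high temperature converges to the corresponding lognormal moments because the $\sqrt N R_{\alpha\beta}$'s are asymptotically independent standard normals. Upgrading to convergence in probability of the \emph{difference} in \eqref{eq: FE estimate} requires joint moments of $\log Z_N^{(2)}$ with the $C_{N,k}$'s, using the classical joint Gaussian limit of GOE cycle counts ($\mathrm{Var}(C_{N,k})\to 2k$, asymptotic independence across $k$). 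The main obstacle is the combinatorial bookkeeping in Stage 2: showing that non-cycle connected Eulerian contributions, together with the higher-order $\tanh$ and $\log\cosh$ corrections, combine to produce exactly the deterministic counterterms $-\mu_k^2/(4k)$ and no residual random fluctuations. The radius of convergence of this expansion, together with the 2-spin overlap concentration needed for Stage 1, determines the precise value of $\beta_\xi$.
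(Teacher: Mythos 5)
Your proposal is a genuinely different route from the paper's. You are essentially proposing to (i) reduce to the pure $2$-spin model by a \emph{conditional} second moment argument given $H_N^2$, and then (ii) run the Aizenman--Lebowitz--Ruelle high-temperature expansion ($\cosh/\tanh$ identity, sum over even subgraphs, simple cycles dominate) to extract the cycle counts. The paper instead never expands the partition function at all: it views $\hat Z_N=Z_N/\EN[Z_N]$ as the Radon--Nikodym derivative of a tilted measure $\mathbb{Q}_N$ under which the $J_{ij}$ acquire a random mean, proves a CLT for the cycle counts under both $\PN$ and $\mathbb{Q}_N$, and then identifies $\hat Z_N$ with $\exp(\sum_k\{\mu_kC_{N,k}/2k-\mu_k^2/4k\})$ via the $L^2$ projection identity $\mathbb{E}_\infty[(\hat Z_\infty-M_{\infty,K})^2]=\mathbb{E}_\infty[\hat Z_\infty^2]-\mathbb{E}_\infty[M_{\infty,K}^2]\to0$, carried out on a subsequential limit space where the cycle counts are exactly Gaussian (to sidestep their lack of exponential moments at finite $N$). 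The only second-moment input the paper needs is the \emph{annealed} asymptotic $\EN[\hat Z_N^2]\to(1-2\alpha_2\beta^2)^{-1/2}$, which is a one-dimensional Laplace-type computation; it does not need $Z_N$ to concentrate around any conditional mean.

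That said, as written your proposal has two genuine gaps, and they are precisely the hard parts. First, Stage 1 requires showing $\langle e^{N\beta^2\tilde\xi(R_{12})}\rangle_2\to1$ for the \emph{quenched} replicated Gibbs measure of the $2$-spin model, i.e.\ quenched overlap concentration at scale $N^{-1/2}$ with control of exponential moments and of the large-overlap tail. This is a substantially stronger statement than the annealed computation in the paper's Lemma~\ref{lem: sec mom}; it is true at sufficiently high temperature but must be imported or proved, and it will generally determine a different (and possibly smaller) $\beta_\xi$ than the one the paper obtains from the annealed criterion. Second, in Stage 2 the passage from the even-subgraph sum to $\exp(\sum_{k\ge3}\mu_kC_{N,k}/2k+\text{const})$ is not a formal application of the exponential formula: even subgraphs are edge-disjoint unions of connected pieces, so one needs an actual polymer/cluster expansion (or ALR's direct variance computations) to show that non-simple-cycle connected Eulerian subgraphs, the higher-order $\tanh$ and $\log\cosh$ corrections, and the combinatorial constraints between components contribute exactly the deterministic counterterms $-\mu_k^2/(4k)$ and nothing random. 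You correctly flag this as the main obstacle, but it is the entire content of the argument for the $2$-spin fluctuations, so the proposal should be regarded as a plausible program rather than a proof. If completed, its payoff would be a more explicit, measure-free derivation; the paper's method trades that explicitness for a much shorter soft argument whose only quantitative inputs are the cycle-count CLTs and the annealed second moment.
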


Theorem \ref{thm: sec thm} identifies the origin of the fluctuations of the free energy as fluctuations of the cycle counts arising from the 2-spin component.
\subsection{Previous work}
To the best of our knowledge the first work on fluctuations of the free energy was \cite{ALR}, which obtained the fluctuations in the standard Sherrington-Kirkpatrick model (that is the case $\xi(x)=x^2$) up to the critical temperature. It derived both the leading order of the free energy and the fluctuations through a graphical analysis, in which cycle counts give the main contribution. In the present work we rely on the second moment method for the leading order, and find a different way to use graphical analysis to study the fluctuations (see Section \ref{sec: outline} below for more details).

The work \cite{bovier2002fluctuations} obtained the law of the fluctuations for pure $p$-spin Hamiltonians (that is for the case $\xi(x)=x^p$ for some $p\ge2$) for small enough $\beta$ using a martingale method. Furthermore \cite{ChenDeyPanchenkoFluctFEmixedpspinextfield} did the same for mixed $p$-spin Hamiltonians without odd $p$-terms and with non-zero external field at all temperatures using a combination of interpolation, the Chen-Stein method and the Parisi formula. For the spherical SK model the fluctuations of the free energy at high temperature has been obtained in \cite{BaikLeeFluctuations} using the random matrix techniques that are applicable to that special case. Similar techniques are used in the related works \cite{Landon2SpinSphericalCriticalFluct,LandonFluct2SpinExtField,Landon2spinFluctLowTemp}.

\subsection{Outline of proof}\label{sec: outline}
We now give a high-level sketch of the proof of the estimate Theorem \ref{thm: sec thm} of $\log Z_N$ in terms of cycle counts. The method has previously been used by the first author to study fluctuations in a stochastic block model \cite{Ban16}, %
in a hypothesis testing problem for spiked random matrices
\cite{BanMa18} and in the SK model with Curie-Weiss interaction \cite{Ban18}.

It is known since \cite{ALR} that the subleading fluctuations of the pure $2$-spin model are determined by the cycle counts $C_{N,k}$. In the approach of \cite{ALR} the cycle counts appear as the leading contributions in a graphical cluster expansion. Cycle counts are also relevant in the study of fluctuations in the stochastic block model \cite{MNS12} and random regular graphs \cite{Jan}. In this approach the cycle counts enter the analysis via certain Radon-Nikyodym derivatives. Our method is inspired by the latter approach.

Once one suspects that the cycle counts determine the fluctuations of $Z_N$ one can guess the form of the fluctuations as follows: One views $\hat{Z}_N = Z_N/\mathbb{E}_N[Z_N]$ as a Radon-Nikodym derivative $\frac{d\mathbb{P}_N}{d\mathbb{Q}_N}$ and considers the law of the sequence
\begin{equation}\label{eq: cycle count sequence}
C_{N,1}, C_{N,2}, C_{N,3}\ldots,
\end{equation}
of cycle counts under the measures $\mathbb{Q}_N$.
Under the measure $\PN$ one can show using the moment method that the cycle counts are asymptotically Gaussian:
\begin{equation}\label{eq: cycle count CLT}
     C_{N,k} \overset{D}{\to} \mathcal{N}(0, 2k) \text{ under } \PN,
\end{equation}
jointly for finitely many $k$, with the $C_{N,k}$ becoming independent in the limit. For more details on this important computation see the next subsection and Proposition \ref{prop: cycle counts under P}. It turns out that $\hat{Z}_N$ as a Radon-Nikodym derivative changes the law of the $J_{ij}$ in a simple way: it gives them a random non-zero mean, but otherwise the law stays the same (Lemma \ref{lem: rad niko}, Corollary \ref{cor: QN mixture}). Using this one can show that under $\mathbb{Q}_N$ the cycle counts are also asymptotically Gaussian but with a non-zero mean, namely
\begin{equation}\label{eq: cycle count CLT Q}
     C_{N,k} \overset{D}{\to} \mathcal{N}(\mu_k, 2k) \text{ under } \mathbb{Q}_N,
\end{equation}
jointly for finitely many $k$, still independent in the limit (see Proposition \ref{prop: cycle counts under Q}).

Recall that if of $C\sim\mathcal{N}\left(0,\sigma^{2}\right)$
then the Radon-Nikodym derivative $\exp\left(\frac{2\mu C - \mu^2 }{2\sigma^{2}}\right)$
changes the law of $C$ to $\mathcal{N}\left(\mu,\sigma^{2}\right)$.
Now if under a measure $\mathbb{P}$ the sequence \eqref{eq: cycle count sequence} is exactly independent Gaussian with mean $0$ and variance of $k$-th variable given by $2k$, and $\mathbb{Q}$ is the measure where they have the same distribution but with the mean of the $k$-th variable given by $\mu_k$ instead of $0$, then we necessarily have
\begin{equation}\label{eq: rad niko intro}
\frac{d \mathbb{P}}{d \mathbb{Q}}
= \exp\left( \sum_{k=1}^{\infty}\frac{2\mu_{k}C_{N,k}-\mu_{k}^{2}}{4k} \right).
\end{equation}

Thus a possible approximation of $\log \hat{Z}_N$ is as $\log$ of the right-hand side, which is precisely the sum that appears in \eqref{eq: FE estimate}. We prove Theorem \ref{thm: sec thm} by making this approximation rigorous and using \eqref{eq: ann Z}.

To achieve this we roughly speaking use a second moment estimate for the difference between the normalized partition function $\hat{Z}_N$ and the the RHS of \eqref{eq: rad niko intro}, together with the Chebyshev inequality. Since for finite $N$ the cycle counts have tails that decay too slowly for the RHS of \eqref{eq: rad niko intro} to have a finite second moment, we carry out this argument on a limiting probability space where after taking $N\to\infty$ the cycle counts become exactly Gaussian and thus do have finite exponential moments. The second moment argument depends on the second moment $\EN[Z_N^2]$ of the partition function being asymptotic to $c_\xi \EN[Z_N]^2$, where $c_\xi$ is a constant depending on $\xi$, and therefore works precisely when the vanilla second moment method proves that the leading order free energy is given by its annealed value.

Theorem \ref{thm: main thm} is a simple consequence of Theorem \ref{thm: sec thm} and the asymptotic normality and independence of the cycle counts.

\subsection{Cycle counts} 
As mentioned above a crucial step in both the proof of Theorem \ref{thm: sec thm} and the derivation of Theorem \ref{thm: main thm} is the asymptotic normality \eqref{eq: cycle count CLT} of the cycle counts, which we prove in Proposition \ref{prop: cycle counts under P}.

The cycle counts are related the to traces of a power of a GOE random matrix studied in random matrix theory; indeed if the sum in \eqref{eq: cycle count def} is taken over all $i_0,\ldots,i_{k-1}$ without the requirement that they be distinct then this sum is precisely $N^{-k/2} \text{Tr}(J^k)$. It is well-known that the traces satisfy a CLT with the same normalization $N^{-k/2}$ but different recentering (see e.g. \cite[Chapter 1]{sinai1998central,AZ05,AGZ}). The traces do not become asymptotically independent, and the variance of $N^{-k/2}  \text{Tr}(J^k)$ is different from that of the corresponding cycle count. To prove \eqref{eq: cycle count CLT} we adapt in Section \ref{sec: mom} the random matrix method to study traces, namely the moment method together with a graphical computation of the moments. The latter computation turns out to be simpler for cycle counts than for traces, since the restriction to distinct $i_0,\ldots,i_{k-1}$ leads to a simpler collection of graphs, namely only cycles.
\subsection{Discussion}
It is natural to ask for how large $\beta$ the claims of Theorems \ref{thm: main thm}-\ref{thm: sec thm} remain true. Let $\beta_*$ denote the supremum of all such $\beta$.

One may note that the recentering and variance in \eqref{eq: FE conv in law} explodes as $\beta$ approaches $\frac{1}{\sqrt{2\alpha_2}}$, so that certainly $\beta_* \le \frac{1}{\sqrt{2\alpha_2}}$. We must also have $\beta_* \le \beta_c$, where $\beta_c$ is the critical inverse temperature of the static phase transition for the Hamiltonian, since for $\beta>\beta_c$ even the leading order term in \eqref{eq: FE conv in law} is incorrect.

As alluded to above, the $\beta_{\xi}$ in our theorems is the largest inverse temperature for which the vanilla second moment proves a lower bound for the free energy, and thus we have $\beta_\xi = \beta_* = \beta_c$ only for the pure $2$-spin model, and otherwise we expect that $\beta_\xi < \beta_* \le \beta_c$.

An interesting question is whether \eqref{eq: FE conv in law} holds all the way up to $\beta_c$, i.e. if $\beta_* = \beta_c$, as is the case for the 2-spin model, or if there is a second regime with different fluctuations at high temperature, as is the case for the REM model \cite{bovier2002fluctuations}.

\section{Asymptotic normality of cycle counts using method of moments and Wick's formula}\label{sec: mom}
Recall that $\mathbb{P}_{N}$ is the probability of the probability space on which $J$ and $\tilde{H}_N$ are defined. In this section we will prove the following result on the convergence in distribution of the weighted
cycle counts $C_{N,k}$ from \eqref{eq: cycle count def} under the measure $\mathbb{P}_{N}$.

\begin{proposition}[Limiting law of centered cycle counts under $\mathbb{P}_N$]
\label{prop: cycle counts under P}For any $k\ge1$
\begin{equation}\label{eq: cylce counds under P}
\mathbb{P}_{N}\text{-law of }\left(C_{N,1},C_{N,2},\ldots,C_{N,k}\right)\overset{D}{\to}\left(C_{\infty,1},C_{\infty,2},\ldots,C_{\infty,k}\right),
\end{equation}
where $\left(C_{\infty,1},\ldots,C_{\infty,k}\right)$ is a centered
independent Gaussian vector where $C_{\infty,k}$ has variance $2k$.
\end{proposition}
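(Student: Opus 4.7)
The plan is to prove joint convergence by the method of moments. Since the claimed limit is a Gaussian vector, whose joint law is determined by its moments, it suffices to show that for every $m_1,\ldots,m_k\ge 0$,
\begin{equation*}
\EN\!\left[\prod_{j=1}^{k} C_{N,j}^{m_j}\right]\;\longrightarrow\; \prod_{j=1}^{k} (2j)^{m_j/2}(m_j-1)!!\,\mathbf{1}_{\{m_j\text{ even}\}},
\end{equation*}
the right-hand side being the joint moment of independent centered Gaussians with variances $2j$. Expanding each $C_{N,j}$ via \eqref{eq: cycle count def} writes the left-hand side, after absorbing the $(N-1)\mathbf{1}_{j=2}$ subtraction, as a sum whose summands are products of GOE entries $J_{ab}$ indexed by $M:=\sum_j m_j$ labeled cycles, each with its own tuple of distinct indices. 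Since every $J_{ab}$ is Gaussian, Wick's (Isserlis') formula then expresses the expectation of each summand as a sum over perfect matchings of the edge-variables.

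\textbf{Graphical analysis.} The only nonzero pairwise covariances are $\EN[J_{ab}J_{cd}]=\mathbf{1}_{\{a,b\}=\{c,d\}}(1+\mathbf{1}_{a=b})$, so each surviving Wick matching imposes the constraint that paired edges have identical unordered vertex-pairs. A key structural observation is that edges within a single cycle can never be paired, because the indices along one cycle are distinct; hence every contributing matching pairs edges drawn from two different cycles. After imposing the constraints one is left with a certain number of free index summations, whose count, compared to the normalization $N^{\sum_j jm_j/2}$, determines the order of the term.

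\textbf{Dominant contribution.} A counting argument, simpler than the analogous one for the standard GOE trace CLT because the distinct-index requirement forbids any degenerate within-cycle merges, shows that the leading order is attained exactly when the $M$ cycles are partitioned into $M/2$ unordered pairs, each pair consisting of two cycles of equal length $j$ whose vertex sets coincide, with the pairing of their edges being one of the $2j$ rotation-or-reflection alignments of a $j$-cycle. Hence a nonvanishing contribution requires every $m_j$ to be even, and enumerating the $(m_j-1)!!$ pair-partitions together with the $(2j)^{m_j/2}$ alignments and the factor $\prod_j [N(N-1)\cdots(N-j+1)/N^{j}]\to 1$ from the distinct-index sums recovers exactly the Gaussian moment formula. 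Any cross-pairing between cycles of unequal lengths fails to form a complete alignment and strictly reduces the free-index count, yielding the asymptotic independence across different $j$.

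\textbf{Remaining work and main obstacle.} Two points require care. First, the $(N-1)$ subtraction in the definition of $C_{N,2}$ must be shown to exactly cancel the ``self-pairing'' Wick contribution in which the two factors $J_{i_0 i_1}$ coming from a single $2$-cycle are matched with each other; without this recentering even $\EN[C_{N,2}]$ would diverge, and one checks case-by-case in the moment expansion that the subtraction neutralizes precisely these otherwise-dominant self-pairings. Second, and this is the principal technical obstacle, one must rigorously control all subleading Wick matchings: any matching that is not a disjoint union of aligned cycle-pairs as above must strictly reduce the number of free indices, and the resulting sum, after division by $N^{\sum_j jm_j/2}$, must vanish as $N\to\infty$. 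This I would handle by viewing each Wick matching as producing a multigraph on the free indices and bounding the number of vertices in terms of the number of edges and connected components via an Euler-characteristic-style inequality, which forces strict loss whenever the matching departs from the dominant structure.
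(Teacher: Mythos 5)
Your proposal is correct in outline and follows the same basic strategy as the paper: method of moments plus a graphical counting argument showing that the dominant contributions to the mixed moments come from perfect pairings of equal-length cycles with coinciding vertex sets, each pair contributing $2j$ to the covariance. The organization of the combinatorics differs slightly. You apply Wick/Isserlis directly to the individual Gaussian entries $J_{ab}$ and then analyze the resulting edge-matchings, proposing to prove the vertex-counting bound (your Euler-characteristic inequality) yourself; the paper instead works with centered word weights $\hat J_w$ and sentences, so that the $(N-1)$ recentering and the $k=2$ self-pairing issue are absorbed once and for all into the definition $\hat J_w = J_{w_0w_1}^2-1$, imports the needed vertex bound from Anderson--Zeitouni (their Proposition 4.9), and invokes Wick's formula only at the end, as a characterization of the Gaussian limit once the moments are shown to factor over pairings. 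Your route buys self-containedness at the cost of redoing that counting lemma; the paper's buys a cleaner treatment of the degenerate lengths. Two small imprecisions to fix in a full write-up: your ``key structural observation'' that edges within a single cycle can never be Wick-paired fails for $j=1$ (a single loop) and $j=2$ (the two traversals of $\{i_0,i_1\}$ are the same Gaussian) --- you do handle the $j=2$ self-pairing later, but the observation should be stated only for $j\ge 3$; and the ``$2j$ alignments'' count is literally correct only for $j\ge 3$, the cases $j=1,2$ getting their factor of $2j$ partly from the covariances $\E[J_{ii}^2]=2$ and $\mathrm{Var}[J_{ij}^2]=2$ rather than from alignments alone (the paper's constants $a_1=a_2=2$, $a_k=1$ for $k\ge3$).
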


The proof will use the method of moments, Wick's formula and the combinatorial framework from \cite{AZ05} and \cite[Chapter 1]{AGZ}. We first state some elements of the latter framework.

\begin{definition}[Word]
For a given $N\ge1$, a letter is an element of $\{1,\ldots,N\}$. A word $w$ is a finite sequence of letters $s_1 \ldots s_n$, at least one letter long.
A word $w$ is closed if its first and last letters are the same.
\end{definition}

For any word $w = (w_0 \ldots w_{k-1})$, we use $l(w) = k$ to denote the length of $w$ %
and $\mathrm{supp}(w)$ to denote the support of $w$, i.e. the set of letters appearing in $w$. To any word $w$ we may associate a graph as follows.
\begin{definition}[Graph associated with a word]
Given a word $w = (w_0, \ldots, w_{k-1})$,
we let $G_w = (V_w,E_w)$ be the graph with vertex set $V_w = \mathrm{supp}(w)$ and edge set $E_w = \{\{w_i, w_{i+1}
\}; i = 0,\ldots ,k - 2
\}.$
\end{definition}
Note that $G_w$ is an undirected simple graph permitting loops.
The word $w$ defines a walk on the graph $G_w$ which further starts and terminates at the same vertex if the word is closed. For $e \in E_w$, we use $N^w_e$ to denote the number of times this walk traverses
the edge $e$ (in any direction).

In this paper we shall mainly deal with a special class of words, namely cyclic words.
\begin{definition}[Set $\mathfrak{W}_{l}$ of cyclic words]\label{def: cyclic word}
We call a word $w$ cyclic if $l(w)=2,3$ and the word is closed, or if $l(w)\ge4$ and the word is closed, the graph $G_{w}$ is a cycle and $N^w_e = 1$ for each edge $e$ in $G_{w}$. We write $\mathfrak{W}_{l}$ for the set of all such words of length $l$.
\end{definition}
Note that for $k\ge1$ we have $w \in \mathfrak{W}_{k+1}$ iff $w = (i_0,\ldots,i_{k-1},i_0)$ for $i_0,\ldots,i_{k-1}$ distinct. Thus
\begin{equation}\label{eq: size of Wk}
|\mathfrak{W}_{k+1}|=\begin{cases}
N(N-1)\ldots(N-k+1) & \text{ for }0\le k\le N,\\
0 & \text{\,for }k>N.
\end{cases}
\end{equation}
Also we see that the sum in \eqref{eq: cycle count def} is exactly a sum over all $(i_0,\ldots,i_{k-1},i_0) \in \mathfrak{W}_{k+1}$, so that

\begin{equation}\label{eq: cycle count in terms of cylic word non cent weight}
    C_{N,k} = \frac{1}{N^{\frac{k}{2}}}\sum_{w \in \mathfrak{W}_{k+1}} J_{w} - (N-1)1_{\{k=2\}}\text{ for }k\ge1,
\end{equation}
where we define the weight 
\begin{equation}\label{eq: word weight def not cent}
J_w = \prod_{i=0}^{l(w)-1} J_{w_{i}w_{i+1}},
\end{equation}
of a cyclic word $w$. We define also the centered weight
\begin{equation}\label{eq: word weight def}
\hat{J}_w = J_w -\EN[{J}_w]
=\begin{cases}
J_{w} & \text{\,if }l(w)\ne 3,\\
J_{w_{0}w_{1}}^{2}-1 & \text{\,if }l(w)=3,
\end{cases}
\end{equation}
of a cyclic word $w$ (note that the words $w\in\mathfrak{W}_{k+1}$ for $k=2$ are special since they satisfy $\EN[J_w]=\EN[J_{w_0w_1}^2]=1$, otherwise $\EN[J_w]=0$). Using this and for the case $k=2$ that $|\mathfrak{W}_3|=N(N-1)$ we obtain from \eqref{eq: cycle count in terms of cylic word non cent weight} the formula
\begin{equation}\label{eq: cycle count in terms of cyclic words}
C_{N,k} = \frac{1}{N^{\frac{k}{2}}}\sum_{w \in \mathfrak{W}_{k+1}} \hat{J}_{w},k\ge 1.
\end{equation}

To make use of this formula we will use the following properties of centered word weights.
\begin{lemma}[Properties of centered word weights]\label{lem: weight prop}
For all cyclic words $w$
\begin{equation}\label{eq: word weight mean}
    \EN[\hat{J}_w]=0.
\end{equation}
Furthermore for all cylic words $w,v$
    \begin{equation}\label{eq: word weight covar}
\EN[\hat{J}_{w}\hat{J}_{v}]
=
\begin{cases}
0 & \text{\,if }G_{w}\ne G_{v},\\
a_{k} & \text{ if }G_{w}=G_{v}\text{ and }w,v\in\mathfrak{W}_{k+1}\text{\,for }k\ge1,
\end{cases}
\end{equation}
where $a_1 = a_2=2, a_k=1,k\ge 3$.

Lastly for any sets $A,B$ of cyclic words
\begin{equation}\label{eq: word weight indep}
    \left(\hat{J}_{w}\right)_{w\in A}\text{\,is independent of }\left(\hat{J}_{v}\right)_{v\in B}\text{ if }\left(\bigcup_{w\in A}E\left(G_{w}\right)\right)\cap\left(\bigcup_{v\in B}E\left(G_{v}\right)\right)=\emptyset.
\end{equation}    
\end{lemma}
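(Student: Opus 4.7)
All three claims reduce to reading off the explicit form of $\hat{J}_w$ as a polynomial in the GOE edge variables and then using independence and centering. In the three regimes admitted by Definition~\ref{def: cyclic word} I would first record that $\hat{J}_w = J_{ii}$ when $l(w)=2$ (with $\{i\}$ the unique loop in $E(G_w)$), $\hat{J}_w = J_{ij}^2-1$ when $l(w)=3$ (with $\{i,j\}$ the unique off-diagonal edge), and $\hat{J}_w = \prod_{e\in E(G_w)} J_e$ when $l(w)\ge 4$ (each edge appearing to the first power since $G_w$ is a cycle traversed once). Identifying each undirected edge of the complete graph on $\{1,\ldots,N\}$ (loops included) with the independent Gaussian supplied by \eqref{eq: J tilde law}, each $\hat{J}_w$ is a measurable function of $(J_e)_{e\in E(G_w)}$ alone.

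With these formulas, \eqref{eq: word weight mean} is immediate from $\hat{J}_w=J_w-\EN[J_w]$, and \eqref{eq: word weight indep} follows because $(\hat{J}_w)_{w\in A}$ depends measurably only on the edge variables indexed by $\bigcup_{w\in A} E(G_w)$, and analogously for $B$, while edge variables attached to disjoint edge sets are mutually independent under $\PN$ by \eqref{eq: J tilde law}.

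For \eqref{eq: word weight covar}, I would split on whether $G_w=G_v$. In the equal case a direct computation in each regime gives $a_k$: $\EN[J_{ii}^2]=2$, $\EN[(J_{ij}^2-1)^2]=3-2+1=2$, and $\EN[\prod_{e\in E(G_w)} J_e^2]=\prod_e \EN[J_e^2]=1$, yielding $a_1=a_2=2$ and $a_k=1$ for $k\ge 3$. In the unequal case, I would condition on the shared edge variables $(J_e)_{e\in E(G_w)\cap E(G_v)}$; either $E(G_w)\setminus E(G_v)$ or $E(G_v)\setminus E(G_w)$ is non-empty, and after swapping $w,v$ if needed one may assume the former, so it suffices to show that $\EN[\hat{J}_w\mid (J_e)_{e\in E(G_w)\cap E(G_v)}]=0$. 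This in turn is checked regime by regime: the residual average is either $\EN[J_{ii}]$, $\EN[J_{ij}^2-1]$, or a product containing at least one $\EN[J_e]$ factor with $e\in E(G_w)\setminus E(G_v)$, each of which vanishes.

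The main bookkeeping subtlety, and the reason $a_1=a_2=2$ while $a_k=1$ for $k\ge 3$, is the anomalous length-$3$ regime where $\hat{J}_w$ is \emph{quadratic} in a single off-diagonal Gaussian rather than multilinear. One must also verify that whichever side of the symmetric difference $E(G_w)\triangle E(G_v)$ one works with, the corresponding $\hat{J}$ genuinely conditionally centers, which is what forces the case split between the three regimes in the argument above. Once these points are handled, the lemma is a direct consequence of the independence of GOE entries and the vanishing of odd Gaussian moments.
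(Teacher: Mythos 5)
Your proposal is correct and follows essentially the same route as the paper: explicit case analysis of $\hat{J}_w$ in the three regimes $l(w)=2,3,\ge 4$, direct computation of the second moments $\EN[J_{ii}^2]=2$, $\EN[(J_{ij}^2-1)^2]=2$, $\prod_e \EN[J_e^2]=1$ in the equal-graph case, and isolation of a mean-zero unshared edge factor in the unequal case (the paper phrases this as "the factor appears exactly once in the product" rather than via conditioning on the shared edge variables, but the mechanism is identical).
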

\begin{proof}
The claim \eqref{eq: word weight mean} is immediate from the definition \eqref{eq: word weight def} of $\hat{J}_w$.

Turning to \eqref{eq: word weight covar}, recall first \eqref{eq: J tilde law}. Note that if $G_w\ne G_v$ then there is an edge $e=\{i,j\}$ that is in only one of $G_w$ and $G_v$, so that $J_{ij}=J_{ji}$ appears in the product $\hat{J}_w \hat{J}_v$ exactly once, as either a factor $J_{ij}$ or a factor $J_{ij}^2-1$ for $i\ne j$, both of which have mean zero, so that the independence of the $J_{ij}$ implies that $\EN[ \hat{J}_w \hat{J}_v ]=0$. Now consider the case $G_w = G_v$, which can only occur if $w,v\in \mathcal{W}_{k+1}$ for $k\ge 1$. If $k\ge3$ then $\hat{J}_w \hat{J}_v = \prod_{ \{i,j\} \in E(G_w)} J_{ij}^2$ which has mean $1$, and if $k=2$ then $\hat{J}_w \hat{J}_v = \left(J_{w_0w_1}^2-1\right)^2$ which has mean $2$ and finally if $k=1$ then $\hat{J}_w \hat{J}_v = J_{w_0w_0}^2$ which has mean $2$. This proves \eqref{eq: word weight covar}.

The claim \eqref{eq: word weight indep} follows because if the words in $A$ do not share any edges with the words in $B$ then there is no random variable $J_{ij}$ that appears in both in $\hat{J}_w$ for some $w \in A$ and in $\hat{J}_v$ for some $v\in B$.
\end{proof}

We now compute the mean and the variance $C_{N,k}$ using \eqref{eq: cycle count in terms of cyclic words} and the previous lemma.

\begin{lemma}[Mean and variance of $C_{N,k}$]\label{lem: means and variances}
For all $N$ it holds under $\PN$ that
\begin{equation}\label{eq: means}
\EN[C_{N,k}] = 0\text{ for all }k\ge1,
\end{equation}
and
\begin{equation}\label{eq: covar}
    \EN[ C_{N,k}C_{N,l}] =0\text{ for all }k\ne l,
\end{equation}
and
\begin{equation}\label{varcnk}
\Var[C_{N,k}]= 
\begin{cases}
2k \frac{N(N-1)\ldots (N-k+1)}{N^{k}} & \text{ for } k\le N, \\
0 & \text{ for }k>N.
\end{cases}
\end{equation}
For fixed $k\ge1$
\begin{equation}\label{eq: variances}
    \Var[ C_{N,k} ]  \to 2k \mbox{ as }N\to\infty.
\end{equation}
\end{lemma}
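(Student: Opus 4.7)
The plan is to derive each of the four claims directly from the representation \eqref{eq: cycle count in terms of cyclic words} combined with the three properties in Lemma \ref{lem: weight prop}. The mean identity \eqref{eq: means} is immediate, since $C_{N,k}$ is a linear combination of the centered word weights $\hat{J}_{w}$, each of which has mean zero by \eqref{eq: word weight mean}.

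For the off-diagonal covariance \eqref{eq: covar}, I would expand
\begin{equation*}
\EN\!\left[C_{N,k}C_{N,l}\right]=\frac{1}{N^{(k+l)/2}}\sum_{w\in\mathfrak{W}_{k+1}}\sum_{v\in\mathfrak{W}_{l+1}}\EN\!\left[\hat{J}_{w}\hat{J}_{v}\right],
\end{equation*}
and invoke \eqref{eq: word weight covar}, which forces every surviving summand to satisfy $G_{w}=G_{v}$. The graph associated with a word in $\mathfrak{W}_{k+1}$ is a single vertex with a loop if $k=1$, a single ordinary edge if $k=2$, and a $k$-cycle if $k\ge 3$; these three types are topologically distinct and the last type is determined by $k$. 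Hence $G_{w}=G_{v}$ is incompatible with $k\ne l$, and \eqref{eq: covar} follows.

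For the variance formula \eqref{varcnk}, the same expansion with $l=k$ reduces to
\begin{equation*}
\Var[C_{N,k}]=\frac{a_{k}}{N^{k}}\cdot\#\bigl\{(w,v)\in\mathfrak{W}_{k+1}^{2}:G_{w}=G_{v}\bigr\}.
\end{equation*}
I count such pairs by fixing $w$ and counting the $v$'s with $G_{v}=G_{w}$: there is $1$ such $v$ when $k=1$, there are $2$ when $k=2$ (namely $(i_{0},i_{1},i_{0})$ and $(i_{1},i_{0},i_{1})$), and there are $2k$ when $k\ge 3$, since a $k$-cycle can be read off as a closed walk in $k$ rotations and two orientations. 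Multiplying by the corresponding values $a_{k}=2,2,1,1,\ldots$ gives the universal factor $2k$ in every case, so
\begin{equation*}
\Var[C_{N,k}]=\frac{2k\cdot|\mathfrak{W}_{k+1}|}{N^{k}},
\end{equation*}
which together with \eqref{eq: size of Wk} yields \eqref{varcnk}. The asymptotic \eqref{eq: variances} is then immediate from $N(N-1)\cdots(N-k+1)/N^{k}=\prod_{j=0}^{k-1}(1-j/N)\to 1$.

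There is no serious obstacle. The only care needed is the separate treatment of the small cases $k=1,2$, where the graph degenerates to a loop or a single edge; the constants $a_{k}$ from Lemma \ref{lem: weight prop} and the number of graph traversals conspire to produce the universal factor $2k$, so the proof becomes a short piece of bookkeeping once the graphical framework is in place.
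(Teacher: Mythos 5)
Your proposal is correct and follows essentially the same route as the paper: expand via the cyclic-word representation \eqref{eq: cycle count in terms of cyclic words}, apply Lemma \ref{lem: weight prop}, and count the words $v$ with $G_v=G_w$ case by case ($1$, $2$, and $2k$ for $k=1$, $k=2$, $k\ge3$), so that the factors $a_k$ combine with the walk counts to give $2k$ uniformly. No gaps.
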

\begin{proof}
The claim \eqref{eq: means} follows from \eqref{eq: cycle count in terms of cyclic words} and \eqref{eq: word weight mean}. To compute the (co-)variances note that for all $k,l$
\begin{equation}
    \EN[C_{N,k}C_{N,l}]= \frac{1}{N^{\frac{k+l}{2}}}\sum_{w\in \mathfrak{W}_{k+1},v\in \mathfrak{W}_{l+1}}\EN[\hat{J}_{w}\hat{J}_{v}].
\end{equation}
Recalling \eqref{eq: word weight covar} we note that since $G_w\ne G_v$ if $w \in \mathfrak{W}_{k+1},v \in \mathfrak{W}_{l+1}$ for $k\ne l$ the claim \eqref{eq: covar} follows. Setting $w=v$ we get for $k\ge 1$
\begin{equation}
    \EN[C_{N,k}^2] = \frac{a_k}{N^k} \sum_{w \in \mathfrak{W}_{k+1}}  |\{v \in \mathfrak{W}_{k+1}: G_v = G_w \}|,
\end{equation}
for $a_k$ as in \eqref{eq: word weight covar}. If $G_w = G_v$ then the sequence $v$ must be a walk of the graph $G_w$ of length $k+1$ that visits all $k$ vertices of $G_w$ and ends at the vertex where it started.

For $k=1$ there is one such walk, so $|\{v \in \mathfrak{W}_{k+1}: G_v = G_w \}|=1$ which together with $|\mathfrak{W}_2|=N$ and $a_1=2$ gives $\Var[C_{N,1}]=2$ and proves \eqref{varcnk} for $k=1$.

If $k=2$ there is one such walk for each of the two possible starting vertices, so $|\{v \in \mathfrak{W}_{k+1}: G_v = G_w \}|=2$ which together with $|\mathfrak{W}_3|=N(N-1)$ and $a_k=2$ gives $\Var[C_{N,2}]=4(N-1)/N$ and proves \eqref{varcnk} for $k=2$.

If $k\ge3$ then all such walks can be enumerated by picking one of $k$ starting vertices, and then picking one of two directions to traverse the cycle. Therefore  $|\{v \in \mathfrak{W}_{k+1}: G_v = G_w \}|=2k$, so that with $a_k=1$ we get
\begin{equation}
\Var[C_{N,k}]= \frac{2k|\mathfrak{W}_{k+1}|}{N^{k}},
\end{equation}
which implies \eqref{varcnk} for $k\ge 3 $ by \eqref{eq: size of Wk}. Finally \eqref{eq: variances} is a simple consequence of \eqref{varcnk}.
\end{proof}

This shows that the mean and covariance of the vector on the LHS of \eqref{eq: cylce counds under P} converge to the those of the vector on the RHS. To prove the convergence in law we will verify the convergence of higher moments and use the method of moments in the form we now state.

\begin{lemma}[Method of moments]\label{lem:mom}
Let $(Y_{N,1},\ldots, Y_{N,l}), N\ge 1,$ be a sequence of random vectors of dimension $l$. 
Assume that:
\begin{enumerate}
\item[1)] (Mixed moments converge) For any fixed $m$ and $i_1,\ldots,i_m \in \{1,\ldots, l\}$ the limit
\begin{equation}\label{eqn:momcond}
\lim_{N \to \infty}\E[Y_{N,i_1}\ldots Y_{N,i_m}] 
\end{equation}
exists.
\item[2)] (Carleman's Condition; \cite{Carl26})
It holds
$\sum_{h=1}^{\infty} \left(\lim_{N \to \infty}\E[Y_{N,i}^{2h}]\right)^{-\frac{1}{2h}} =\infty$ for all $1\le i \le l$.
\end{enumerate} 
Then the vector $(Y_{N,1},\ldots, Y_{N,l})$ converges in distribution to some random vector $(Z_1,\ldots,Z_l)$. Further, if $m,i_1,\ldots,i_m$ are as in 1) then the limit in \eqref{eqn:momcond} equals $\E[Z_{i_1}\ldots Z_{i_m}]$.
\end{lemma}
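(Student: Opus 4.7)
First I would establish tightness of $(Y_{N,1},\ldots,Y_{N,l})$ on $\mathbb{R}^l$: assumption 1) applied with $m=2$ and $i_1=i_2=i$ gives $\sup_N \E[Y_{N,i}^2] < \infty$ for each $i$, so by Markov's inequality each coordinate is tight and hence the full vector is tight. By Prokhorov's theorem every subsequence admits a further subsequence $N_k$ along which $(Y_{N_k,1},\ldots,Y_{N_k,l})\overset{D}{\to}(Z_1,\ldots,Z_l)$ for some random vector $(Z_1,\ldots,Z_l)$. The plan is to identify the moments of every such subsequential limit with the prescribed limits in \eqref{eqn:momcond} and then use moment uniqueness to show that all subsequential limits agree.

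For the moment identification I would use uniform integrability: fixing $m$ and $i_1,\ldots,i_m \in \{1,\ldots,l\}$ and choosing $\delta > 0$ so that $m(1+\delta)$ is an even integer, H\"older's inequality yields
\[
\E\bigl[|Y_{N,i_1}\cdots Y_{N,i_m}|^{1+\delta}\bigr] \le \prod_{j=1}^{m} \E\bigl[Y_{N,i_j}^{m(1+\delta)}\bigr]^{1/m},
\]
which is uniformly bounded in $N$ by assumption 1). Hence the family $\{Y_{N,i_1}\cdots Y_{N,i_m}\}_N$ is uniformly integrable, and combined with the subsequential weak convergence this yields
\[
\E[Z_{i_1}\cdots Z_{i_m}] = \lim_{N\to\infty} \E[Y_{N,i_1}\cdots Y_{N,i_m}],
\]
the same value for every subsequential limit. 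This already establishes the second conclusion of the lemma.

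It then remains to show that these joint moments uniquely determine the distribution of $(Z_1,\ldots,Z_l)$. By assumption 2) and the moment identification above, each marginal of $Z_i$ satisfies the classical one-dimensional Carleman condition and is therefore determined by its moments. The multivariate analogue---namely the classical theorem that the multivariate moment problem is determinate whenever each one-dimensional marginal is determinate---then gives uniqueness of the joint law. Since every subsequential limit has the same distribution, tightness forces the whole sequence $(Y_{N,1},\ldots,Y_{N,l})$ to converge in distribution to that common limit.

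The main obstacle is the final step, passing from one-dimensional Carleman on each marginal to determinateness of the joint moment problem; a direct attempt via the Cram\'er--Wold device and Minkowski's inequality $\E[(a\cdot Z)^{2h}]^{1/(2h)} \le \sum_i |a_i| \E[Z_i^{2h}]^{1/(2h)}$ runs into the subtlety that Carleman's condition for $a\cdot Z$ does not follow in an entirely elementary way from Carleman for the individual $Z_i$. Once the multivariate uniqueness is taken as a black box, the remaining arguments are routine tightness and uniform-integrability bookkeeping.
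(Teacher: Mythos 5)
The paper states this lemma without proof, citing it as a standard consequence of Carleman's condition, so there is no in-paper argument to compare against; your write-up is the standard proof and it is essentially correct. The tightness step (via $\sup_N\E[Y_{N,i}^2]<\infty$), the uniform-integrability argument via H\"older with $m(1+\delta)$ an even integer, and the identification of the moments of every subsequential limit are all sound. The one step you leave as a black box --- that determinacy of each one-dimensional marginal implies determinacy of the multivariate moment problem --- is a genuine classical theorem (Petersen, 1982), so invoking it is legitimate; and you are right to be suspicious of the naive Cram\'er--Wold route, since the Minkowski bound only gives $\E[(a\cdot Z)^{2h}]^{-1/(2h)}\ge c\,\min_i\E[Z_i^{2h}]^{-1/(2h)}$, and the sum over $h$ of a minimum of terms from several divergent series need not diverge. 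If you want to avoid the multivariate determinacy citation altogether, note that in the paper's application the limiting mixed moments satisfy Wick's formula, so the limit is identified directly as a Gaussian vector; but as a proof of the lemma in the generality stated, your argument with Petersen's theorem is the right one.
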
 
Next we recall Wick's formula.
\begin{lemma}[Wick's formula; \cite{W50}]\label{lem:wick}
Let $(Y_1,\ldots, Y_{l})$ be a centered random vector of dimension $l$ with covariance matrix $\Sigma$ (possibly singular). Then $(Y_1,\ldots, Y_{l})$ is jointly Gaussian if and only if for any integer $m$ and and $i_1,\ldots,i_m \in \{1,\ldots,l\}$ we have with $X_{r} = Y_{i_r}$ that
\begin{equation}\label{eqn:wick}
\E[X_1\ldots X_{m}]=
\begin{cases}
      \sum_{\eta} \prod_{\{i,j\} \in \eta } \E[X_i X_j]& ~ \text{for $m$ even}\\     
      0 & \text{for $m$ odd,}
\end{cases}
\end{equation}
where the sum is over pairings $\eta$ of $\{1,\ldots,m \}$ (that is of partitions of this set into $\frac{m}{2}$ sets containing exactly $2$ elements). \end{lemma}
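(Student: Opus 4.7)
The plan is to prove the two directions of the equivalence separately.

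For the ``if'' direction, suppose the moments of $(Y_1, \ldots, Y_l)$ satisfy (\ref{eqn:wick}). Let $(Z_1, \ldots, Z_l)$ be the centered Gaussian vector with covariance $\Sigma$. By the ``only if'' direction its moments also satisfy Wick's formula, so the mixed moments of $Y$ and $Z$ agree. Gaussian distributions on $\mathbb{R}^l$ are determined by their moments (Carleman's condition is easily verified since Gaussian $(2h)$-th moments grow like $C^h h!$, so $(\mathbb{E}[Z_i^{2h}])^{-1/(2h)}\gtrsim h^{-1/2}$ has divergent sum), so $(Y_1,\ldots,Y_l)$ has the same law as $(Z_1, \ldots, Z_l)$ and is Gaussian.

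For the ``only if'' direction, I would argue by induction on $m$ using Gaussian integration by parts. The key input is Stein's identity: for any centered jointly Gaussian $(X_1, \ldots, X_m)$ and any polynomial $g$ in the variables $x_2, \ldots, x_m$,
\[
\mathbb{E}\bigl[X_1\, g(X_2, \ldots, X_m)\bigr] = \sum_{j=2}^{m} \mathbb{E}[X_1 X_j]\, \mathbb{E}\bigl[\partial_j g(X_2, \ldots, X_m)\bigr].
\]
Applying this with $g(x_2, \ldots, x_m) = x_2 \cdots x_m$ yields
\[
\mathbb{E}[X_1 X_2 \cdots X_m] = \sum_{j=2}^{m} \mathbb{E}[X_1 X_j]\, \mathbb{E}\Bigl[\prod_{k \ne 1, j} X_k\Bigr],
\]
which is exactly the recursion satisfied by the Wick sum, when one first sums over the possible partner $j$ of the index $1$ in a pairing $\eta$ and then over pairings of the remaining $m-2$ indices. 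An induction on $m$ now yields (\ref{eqn:wick}) for even $m$; for odd $m$ the recursion terminates at $\mathbb{E}[X_i] = 0$ in the base case, so every term vanishes.

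The main technical point is justifying Stein's identity when $\Sigma$ is possibly singular (as the statement permits). When the covariance of $(X_2, \ldots, X_m)$ is nondegenerate the identity follows by ordinary one-dimensional Gaussian integration by parts after conditioning on $X_2, \ldots, X_m$, since the conditional law of $X_1$ is Gaussian with mean a linear combination of the $X_j$. In the general case one decomposes $X_1 = \sum_{j \ge 2} c_j X_j + W$ with $W$ centered Gaussian independent of $X_2, \ldots, X_m$ and the $c_j$ chosen via the Moore--Penrose pseudoinverse of the covariance of $(X_2, \ldots, X_m)$ so that $\mathbb{E}[X_1 X_k] = \sum_j c_j \mathbb{E}[X_j X_k]$ for every $k \ge 2$; the $W$ term contributes nothing to either side of the identity, and the remaining sum reduces to the nondegenerate case applied in the span of $\{X_2, \ldots, X_m\}$. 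This is the only step that requires some care.
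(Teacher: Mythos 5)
Your proposal is correct, but there is nothing in the paper to compare it against: the lemma is stated as a classical result with a citation to Wick's 1950 paper and no proof is given in the text. Your argument is the standard one. The ``only if'' direction via the Gaussian integration-by-parts recursion $\E[X_1\cdots X_m]=\sum_{j\ge2}\E[X_1X_j]\,\E[\prod_{k\ne1,j}X_k]$ and induction on $m$ is sound, and the ``if'' direction correctly reduces to moment determinacy of the Gaussian law (one-dimensional Carleman for each marginal suffices for joint determinacy; note this is exactly the mechanism behind the paper's Lemma \ref{lem:mom}, which you could even invoke directly by applying it to the constant sequence). The only place requiring care is, as you say, Stein's identity for singular $\Sigma$; your pseudoinverse decomposition works, but a slicker route is to observe that both sides of \eqref{eqn:wick} are polynomials in the entries of $\Sigma$, so the identity for nondegenerate covariances extends to all positive semidefinite $\Sigma$ by replacing $\Sigma$ with $\Sigma+\varepsilon I$ and letting $\varepsilon\to0$ (or, alternatively, by differentiating the moment generating function $\exp(\tfrac12 t^{T}\Sigma t)$, which is valid for any PSD $\Sigma$ and yields Wick's formula and its converse in one stroke via matching of Taylor coefficients).
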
 

To compute the higher and mixed moments of the $C_{N,k}$ we will use further combinatorial concepts from \cite{AZ05}.

\begin{definition}[Sentences and corresponding graphs]
A sentence $a=[w_i]_{i=1}^{n}=[[\alpha_{i,j}]_{j=1}^{l(w_i)}]_{i=1}^{n}$ is an ordered collection of $n$ words $w_1,\ldots,w_n$ of length $(l(w_1),\ldots,l(w_n))$. We define the graph $G_a=(V_a,E_a)$ to be the graph with 
\[
V_a= \mathrm{supp}(a), E_a= \left\{ \{ \alpha_{i,j},\alpha_{i,j+1}\}| i=1,\ldots,n ; j=1,\ldots, l(w_i)-1 \}  \right\}. 
\] 
We define an equivalence relation on sentences by saying that sentences $a$ and $b$ are equivalent if there is a permutation of $\{1,\ldots,N\}$ which turns $a$ into $b$ when applied to each letter of each word of $a$.
\end{definition} 
From \eqref{eq: cycle count in terms of cyclic words} one sees that a mixed moment of $C_{N,l}$-s can be written as a sum over sentences:
\begin{equation}\label{eq: mixed mom in terms of sentence}
\EN[ C_{N,l_1}\ldots C_{N,l_m} ]
= \sum_{a \in \mathfrak{W}_{l_1+1}\times \ldots \times \mathfrak{W}_{l_m+1} } \EN\left[ \hat{J}_a \right],
\end{equation}
where
\begin{equation}\label{eq: sentence weight def}
\hat{J}_a = \prod_{i=1}^m \hat{J}_{a_i},
\end{equation}
is the weight of a sentence $a$ of length $m$.

\begin{definition}[Weak CLT sentences]
A sentence $a=[w_i]_{i=1}^{n}$ is called a weak CLT sentence if the following conditions are true:
\begin{enumerate}
\item[1)] All the words $w_i$ are closed.
\item[2)] Jointly the words $w_i$ visit each edge of $G_a$ at least twice.
\item[3)] For each $i\in \{1,\ldots,n \}$, there is another $j\neq i \in \{ 1,\ldots,n\}$ such that $G_{w_i}$ and $G_{w_j}$ have at least one edge in common.
\end{enumerate}
\end{definition}

We have that
\begin{equation}\label{eq: zero if not weak clt}
    \EN\left[ \hat{J}_a \right]=0 \text{ if } a \text{ is a sentence of cylic words and not a weak CLT sentence},
\end{equation}
since a sentence of cyclic words that is not a weak CLT sentence must violate either 2) or 3); if 2) is violated then the product $\hat{J}_a$ contains some $J_{ij},i\le j$ exactly once, so that $\EN[J_a]=0$ by the independence of the $J_{ij}$, and if 3) is violated then there is an $i$ such that $G_{a_i}$ is disjoint from $\cup_{j\ne i} G_{a_j}$ so that $\EN[\hat{J}_a] = \EN\left[\prod_{j\ne i}\hat{J}_{a_j}\right] \EN[\hat{J}_{a_i}] =0$ by \eqref{eq: word weight indep} and \eqref{eq: word weight mean}.

By \eqref{eq: zero if not weak clt} only weak CLT sentences can give a non-zero contribution to the mixed moment in \eqref{eq: mixed mom in terms of sentence}.

\begin{definition}[CLT sentence]\label{def: CLT sentence}
Let $a=[w_{i}]_{i=1}^{m}$ be a weak CLT sentence consisting of $m$ words with length $l_{1}+1, \ldots, l_{m}+1$ respectively. Then $a$ is called a CLT sentence if $|V(G_a)| = \frac{\sum_{r=1}^{m} l_{r}}{2}$
\end{definition}
We will see that CLT sentences give the main contribution to mixed moment \eqref{eq: mixed mom in terms of sentence}, since the entropy of any sentence $a$ with smaller $|V(G_a)|$ will be of lower order in $N$.

The following proposition will be used to show that the sum in \eqref{eq: mixed mom in terms of sentence} restricted to CLT sentences factors in way that makes Wick's formula hold in the limit $N\to\infty$.

\begin{proposition}\label{prop:az}(Proposition 4.9 in \cite{AZ05})
Let $a=[w_{i}]_{i=1}^{m}$ be a weak CLT sentence consisting of $m$ words with length $l_{1}+1, \ldots, l_{m}+1$ respectively. Then we have
$|V(G_a)| \le \frac{\sum_{r=1}^{m} l_{r}}{2}$. Suppose equality occurs (i.e. $a$ is a CLT sentence), then the words $w_{i}$ of the sentence $a$ are perfectly paired
in the sense that for all $i$ there exists a unique $j$ distinct from $i$ such that $w_{i}$ and $w_j$
have a letter in common. In particular, $m$ is even.
\end{proposition}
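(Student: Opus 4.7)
The plan is to partition the words into clusters connected by edge-sharing, use condition 3 to rule out singleton clusters, and establish a per-cluster vertex bound whose sum recovers the claimed inequality. Define $i\sim j$ iff there is a sequence $i=k_0,k_1,\ldots,k_s=j$ such that the consecutive pairs $G_{w_{k_t}},G_{w_{k_{t+1}}}$ share at least one edge, and let $W_1,\ldots, W_p$ be the resulting equivalence classes. By condition 3 of the weak CLT definition each class has $|W_j|\ge 2$. Put $G_j=\bigcup_{i\in W_j} G_{w_i}$, which is connected by construction, and $L_j=\sum_{i\in W_j} l_i$, and observe that edges of $G_j$ are traversed only by words in $W_j$.

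The central step is the per-cluster inequality $|V(G_j)|\le L_j/2$, from which the full bound follows by summing over $j$ and using $|V(G_a)|\le\sum_j|V(G_j)|$. I would prove it by cases. If $G_j$ contains a cycle, then $|V(G_j)|\le|E(G_j)|$ because $G_j$ is connected and not a tree, while $L_j\ge 2|E(G_j)|$ by condition 2, so the bound follows. If $G_j$ is a tree, then every closed walk supported on a subtree must traverse each of its support edges an even number of times, hence at least twice. Since $W_j$ is connected in the edge-sharing graph with $|W_j|\ge 2$, at least one edge of $G_j$ lies in the supports of two distinct words of $W_j$ and so contributes at least $4$ to $L_j$, while every other edge contributes at least $2$; hence $L_j\ge 2|E(G_j)|+2=2|V(G_j)|$, since $|V(G_j)|=|E(G_j)|+1$ for a tree.

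For the equality case, every inequality in the chain must saturate: the $G_j$ are pairwise vertex-disjoint (so that $|V(G_a)|=\sum_j|V(G_j)|$), and $|V(G_j)|=L_j/2$ inside each class. In the cycle case this forces $G_j$ to be unicyclic and each edge traversed exactly twice jointly; in the tree case, only one edge can be shared between words, and by exactly two words each traversing it exactly twice. A further short combinatorial analysis, using the lower bound that a single closed walk on a connected subgraph has length at least twice the number of edges in a spanning tree of its support plus its cyclomatic number, pins down each class to consist of exactly two words whose supports coincide and are traversed in paired fashion. Combined with the vertex-disjointness of classes, this gives the perfect pairing $w_i\leftrightarrow w_j$ at the level of shared letters and in particular forces $m$ to be even.

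The main obstacle will be the tree case of the per-cluster inequality, where the easy slack $|V|\le|E|$ is unavailable and one must exploit condition 3 carefully to harvest the extra two traversals; relatedly, the rigorous equality analysis ruling out clusters of size three or more requires the explicit closed-walk length lower bound above and some care with unicyclic graphs that are not simple cycles.
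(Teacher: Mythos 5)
The paper does not actually prove this statement; it is imported verbatim as Proposition 4.9 of \cite{AZ05}, so there is no in-paper argument to compare against. Judged on its own terms, your cluster decomposition and your proof of the inequality $|V(G_a)|\le\frac{1}{2}\sum_r l_r$ are correct: the edge-sharing classes $W_j$ have size at least $2$ by condition 3, edges of $G_j$ are traversed only by words of $W_j$, and both branches of your per-cluster bound (non-tree: $|V(G_j)|\le|E(G_j)|\le L_j/2$; tree: the bridge-parity argument plus one doubly-covered edge giving $L_j\ge2|E(G_j)|+2=2|V(G_j)|$) are sound. The tree branch of the equality analysis is also correct and does rule out clusters of three or more words there.

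The genuine gap is in the equality analysis for clusters that are not trees. The auxiliary bound you invoke --- that a single closed walk has length at least twice the number of edges of a spanning tree of its support plus its cyclomatic number --- is false: a single circuit of a triangle has $3$ steps, while the bound asserts $2\cdot2+1=5$. Your further claim that at equality each class consists of two words whose supports coincide is also false: taking $G_a$ to be the $4$-cycle on $1,2,3,4$ with a pendant edge $\{4,5\}$, the words $w_1=(1,2,3,4,5,4,1)$ and $w_2=(1,2,3,4,1)$ form a CLT sentence ($l_1+l_2=6+4=10=2|V(G_a)|$, every edge of total multiplicity $2$) with distinct supports. Fortunately only the pairing, not support equality, is needed, and the missing step can be repaired as follows: at equality a non-tree cluster is unicyclic with every edge of total multiplicity exactly $2$; each word traverses every bridge of $G_j$ an even number of times, so $t_e=2$ forces $n_e=1$ for bridges and all edge-sharing occurs on the unique cycle $C$; the set of edges a closed word traverses an odd number of times is an even subgraph of $G_j$, hence $\emptyset$ or $C$, so any word that shares an edge (necessarily traversing it exactly once) must traverse \emph{every} edge of $C$ at least once; since each edge of $C$ has total multiplicity $2$, at most two words in the cluster can do this, and cluster connectivity then forces exactly two words per cluster. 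Combined with the vertex-disjointness of the clusters at equality, this yields the perfect pairing and the evenness of $m$. As written, however, the final step of your equality case rests on a false lemma and does not stand.
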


Using this proposition and \eqref{eq: word weight indep} we have
for any CLT sentence $a$ with pairing $\eta(a)$ that
\begin{equation}\label{eq: CLT sent pairing indep}
\EN\left[\hat{J}_{a}\right]=\EN\left[\prod_{\left\{ i,j\right\} \in\eta\left(a\right)}\hat{J}_{a_{i}}\hat{J}_{a_{j}}\right]=\prod_{\left\{ i,j\right\} \in\eta\left(a\right)}\EN\left[\hat{J}_{a_{i}}\hat{J}_{a_{j}}\right].
\end{equation}

We have now stated all the ingredients necessary to prove Proposition \ref{prop: cycle counts under P}.
\begin{proof}[Proof of Proposition \ref{prop: cycle counts under P}]

Consider the vector $\left(C_{N,1},\ldots,C_{N,k}\right)$. We will
show that the mixed moments of this vector converge and that the limit
is a sum over pairings, to be able to apply Lemmas \ref{lem:mom} and \ref{lem:wick}.

To this end consider for any $m\ge1,l_{1},\ldots,l_{m}\in\left\{ 1,\ldots,k\right\}$ the mixed moment
\[
\EN\left[X_{N,1}\ldots X_{N,m}\right],
\]
where $X_{N,i}=C_{N,l_{i}}$.  By \eqref{eq: cycle count in terms of cyclic words} this equals
\[
\begin{array}{l}
\frac{1}{N^{\frac{l_{1}+\ldots+l_{m}}{2}}} \displaystyle{\sum_{w_{1}\in\mathfrak{W}_{l_{1}+1},\ldots, w_{m}\in\mathfrak{W}_{l_{m}+1}}}\EN\left[\hat{J}_{w_{1}}\ldots\hat{J}_{w_{m}}\right],\end{array}
\]
which by \eqref{eq: zero if not weak clt} is the same as
\[
\frac{1}{N^{\frac{l_{1}+\ldots+l_{m}}{2}}}\sum_{a\in\mathfrak{W}_{l_{1}+1}\times\ldots\times\mathfrak{W}_{l_{m}+1}:a\text{ weak CLT sequence}}\EN\left[\hat{J}_{a}\right].
\]
Consider the magnitude of the contribution of weak CLT sentences whose
corresponding graph has a less than maximal number of vertices, i.e.
\begin{equation}
\left|\frac{1}{N^{\frac{l_{1}+\ldots+l_{m}}{2}}}\sum_{a\in\mathfrak{W}_{l_{1}+1}\times\ldots\times\mathfrak{W}_{l_{m}+1}:a\text{ weak CLT sequence},\left|V\left(G_{a}\right)\right|<\frac{l_{1}+\ldots+l_{m}}{2}}\EN\left[\hat{J}_{a}\right]\right|\label{eq: non CLT sequences-1}.
\end{equation}
Note that the moment $\EN\left[\hat{J}_{a}\right]$ depends only on the \emph{equivalence class of $a$}. Furthermore the set of equivalence classes is a function only of $m,l_1,\ldots,l_m$ and \emph{not} of $N$. Thus we obtain that
\[
\sup_{a\in\mathfrak{W}_{l_{1}+1}\times\ldots\times\mathfrak{W}_{l_{m}+1}}\EN\left[\hat{J}_{a}\right]\le M,
\]
where $M<\infty$ does not depend on $N$, and (\ref{eq: non CLT sequences-1})
is bounded above by 
\begin{equation}\label{eq: sum fewer vertices}
\frac{M}{N^{\frac{l_{1}+\ldots+l_{m}}{2}}}\sum_{a\in\mathfrak{W}_{l_{1}+1}\times\ldots\times\mathfrak{W}_{l_{m}+1}:a\text{ weak CLT sequence},\left|V\left(G_{a}\right)\right|<\frac{l_{1}+\ldots+l_{m}}{2}}1.
\end{equation}
Noting that the number of vertices of $G_a$ depends only on the equivalence class of $a$, and that the number $K$ of equivalence classes of sentences such that $|V(G_a)| < \frac{l_1+\ldots+l_m}{2}$ does not depend on $N$, we can crudely bound the sum in \eqref{eq: sum fewer vertices} by $K\times N^{\frac{l_{1}+\ldots+l_{m}}{2}-1}$. Thus (\ref{eq: non CLT sequences-1})
is at most $cN^{-1}$ for a constant $c$ that depends only on $m,l_{1},\ldots,l_{m}$, and so
(recalling Definition \ref{def: CLT sentence})
\[
\left|\EN\left[X_{N,1}\ldots X_{N,m}\right]-\frac{1}{N^{\frac{l_{1}+\ldots+l_{m}}{2}}}\sum_{a\in\mathfrak{W}_{l_{1}+1}\times\ldots\times\mathfrak{W}_{l_{m}+1}:a\text{ is a CLT sequence}}\EN\left[\hat{J}_{a}\right]\right|\to0.
\]
Now by Proposition \ref{prop:az} all CLT sentences $a$ have length $m$
that is even, which in particular implies that $\EN\left[X_{N,1}\ldots X_{N,m}\right]\to0$
if $m$ is odd. Furthermore recalling \eqref{eq: CLT sent pairing indep} we have
\[
\displaystyle{\sum_{a\in\mathfrak{W}_{l_{1}+1}\times\ldots\times\mathfrak{W}_{l_{m}+1}:a\text{ is a CLT sequence}}}\EN\left[\hat{J}_{a}\right]
=\displaystyle{\sum_{\eta}\sum_{a\in \mathcal{C}_\eta}\prod_{\left\{ i,j\right\} \in\eta}}\EN\left[\hat{J}_{a_{i}}\hat{J}_{a_{j}}\right],
\]
where the sum over $\eta$ is over all pairings of $\left\{ 1,\ldots,m\right\} $ and with $C_{\eta}$ denoting the
set of all $a\in\mathfrak{W}_{l_{1}+1}\times\ldots\times\mathfrak{W}_{l_{m}+1}$
that are CLT sentences whose pairing $\eta(a)$ satisfies $\eta\left(a\right)=\eta$. Now note that for any fixed $\eta$  we have using \eqref{eq: word weight covar}
\begin{equation}
\begin{array}{l}
\frac{1}{N^{\frac{l_{1}+\ldots+l_{m}}{2}}}\left|{\displaystyle \sum_{a\in\mathcal{C}_{\eta}}}\prod_{\left\{ i,j\right\} \in\eta}\EN\left[\hat{J}_{a_{i}}\hat{J}_{a_{j}}\right]-{\displaystyle \sum_{a\in\mathfrak{W}_{l_{1}+1}\times\ldots\times\mathfrak{W}_{l_{m}+1}}}\prod_{\left\{ i,j\right\} \in\eta}\EN\left[\hat{J}_{a_{i}}\hat{J}_{a_{j}}\right]\right|\\
\le\frac{cM}{N^{\frac{l_{1}+\ldots+l_{m}}{2}}}\left|\displaystyle{\sum_{a\in\mathcal{C}_{\eta}}}1_{\left\{ G_{a_{i}}=G_{a_{j}}\text{\,for all }\left\{ i,j\right\} \in\eta\right\} }-{\displaystyle \sum_{a\in\mathfrak{W}_{l_{1}+1}\times\ldots\times\mathfrak{W}_{l_{m}+1}}}1_{\left\{ G_{a_{i}}=G_{a_{j}}\text{\,for all }\left\{ i,j\right\} \in\eta\right\} }\right|.
\end{array}\label{eq: CLT sent diff}
\end{equation}
All sentences $a$ that are CLT sentences
appear in both sums on the bottom line, while the only sentences that appears only in one
are those for which $G_{a_{i}}=G_{a_{j}}$ for all $\left\{ i,j\right\} \in \eta$
but $G_{a_{i}}$ shares a vertex with $G_{a_{j}}$ for some $\left\{ i,j\right\} \notin \eta $.
If so $G_{a}$ must necessarily have less than $\frac{l_{1}+\ldots+l_{m}}{2}$
vertices. Therefore the difference in the bottom line of \eqref{eq: CLT sent diff} is bounded above by
$ K N^{\frac{l_{1}+\ldots+l_{m}}{2}-1}$, for the constant $K$ from before, so that (\ref{eq: CLT sent diff}) goes to zero as $N\to\infty$. Since also the number of pairings $\eta$ does not depend on $N$ we have
\[
\left|\EN\left[X_{N,1}\ldots X_{N,m}\right]-\sum_{\eta}\frac{1}{N^{\frac{l_{1}+\ldots+l_{m}}{2}}}\sum_{a\in\mathfrak{W}_{l_{1}+1}\times\ldots\times\mathfrak{W}_{l_{m}+1}}\prod_{\left\{ i,j\right\} \in\eta}\EN\left[\hat{J}_{a_{i}}\hat{J}_{a_{j}}\right]\right|\to0.
\]
Finally the last sum over $a$ factors as
\[
\begin{array}{lcl}
\frac{1}{N^{\frac{l_{1}+\ldots+l_{m}}{2}}}\displaystyle{\sum_{a\in\mathfrak{W}_{l_{1}+1}\times\ldots\times\mathfrak{W}_{l_{m}+1}}}\prod_{\left\{ i,j\right\} \in\eta}\EN\left[\hat{J}_{a_{i}}\hat{J}_{a_{j}}\right]
&=&\displaystyle{\prod_{\left\{ i,j\right\} \in\eta}}\frac{1}{N^{\frac{l_{i}+l_{j}}{2}}}
\displaystyle{\sum_{w\in\mathfrak{W}_{l_{i}+1},v\in\mathfrak{W}_{l_{j}+1}}}\EN\left[\hat{J}_{w}\hat{J}_{v}\right]\\
&=&\displaystyle{\prod_{\left\{ i,j\right\} \in\eta}}\EN\left[\frac{\sum_{w\in\mathfrak{W}_{l_{i}+1}}\hat{J}_{w}}{N^{\frac{l_{i}}{2}}}\frac{\sum_{v\in\mathfrak{W}_{l_{j}+1}}\hat{J}_{v}}{N^{\frac{l_{j}}{2}}}\right]\\
&\overset{\eqref{eq: cycle count in terms of cyclic words}}{=}&
\displaystyle{\prod_{\left\{ i,j\right\}\in\eta}}\EN\left[C_{N,i}C_{N,j}\right].
\end{array}
\]
Applying also Lemma \ref{lem: means and variances} we have showed that
$$\EN[ X_{N,1}\ldots X_{N,m} ] \to \sum_\eta \prod_{\{i,j\}\in\eta} \E[C_{\infty,i}C_{\infty,j}],$$
i.e. that all the mixed moments converge. Applying this with $m=2h$ and $X_i = C_{N,l}$, and \eqref{eqn:wick} with $X_i=C_{\infty,l}$, one sees that $\lim_{N\to\infty} \EN[ C_{N,l}^{2h}] = \E[ C_{\infty,l}^{2h} ]$ for all positive integers $h$, so that Carleman's condition is easily verified. Therefore by Lemma \ref{lem:mom} the vector
$\left(C_{N,1},\ldots,C_{N,k}\right)$ converges in distribution
to a random vector $\left(Z_{1},\ldots,Z_{k}\right)$. By the same
lemma we have $\EN\left[C_{N,i}C_{N,j}\right]\to \E\left[Z_{i}Z_{j}\right]$,
so that in fact $\E\left[Z_{i}Z_{j}\right]=\E\left[C_{\infty,i}C_{\infty,j}\right]$
for all $i,j$, which means that for all $m\ge1,1\le l_{1},\ldots,l_{m}\le k$ we have
with $X_{j}=Z_{l_{j}}$ that
\[
\E\left[X_{1}\ldots X_{m}\right]=\begin{cases}
\sum_{\eta}\E\left[X_{i}X_{j}\right] & \text{ if }m\text{\,is even,}\\
0 & \text{ otherwise.}
\end{cases}
\]
Thus $\left(Z_{1},\ldots,Z_{k}\right)$ satisfies Wicks formula \eqref{eqn:wick} so by Lemma \ref{lem:wick} the vector
$\left(Z_{1},\ldots,Z_{k}\right)$ is Gaussian. Since its
covariance matches that of $\left(C_{\infty,1},\ldots,C_{\infty,k}\right)$
also its law does.
\end{proof}
\section{Fluctuations of partition function determined by cycle counts; Proof of Theorem \ref{thm: sec thm}}
In this section we will prove Theorem \ref{thm: sec thm}.
Denote the normalized partition function by
\begin{equation}\label{eq: Z hat def}
\hat{Z}_{N}=\frac{Z_{N}}{\EN\left[Z_{N}\right]},
\end{equation}
so that
\begin{equation}\label{eq: Z hat mean}
    \EN[ \hat{Z}_N] = 1.
\end{equation}
Since also $\hat{Z}_{N}\ge0$
we can use it to define a tilted measure $\mathbb{Q}_{N}$ via 
\[
\frac{d\mathbb{Q}_{N}}{d\mathbb{P}_{N}}=\hat{Z}_{N}.
\]
Let 
$ \mathbb{Q}_{N,\sigma},$ 
be a measure under which the $J_{ij}$ are Gaussian with the same covariance as under $\PN$ (see \eqref{eq: J tilde law}), but where $J_{ij}$ has mean $\frac{1}{\sqrt{N}}\beta\sqrt{2\alpha_2} \sigma_i \sigma_j$. Note that
\begin{equation}\label{eq: QNsigma def}
    \frac{d\mathbb{Q}_{N,\sigma}}{d\mathbb{P}_N} = \exp\left( \sum_{i} \left\{\frac{\beta \sqrt{\alpha_2} J_{ii}}{\sqrt{2N}}-\frac{\beta^2\alpha_2}{2N}\right\} +\sum_{i<j} \left\{ \frac{\beta\sqrt{2\alpha_2}\sigma_{i}\sigma_{j}J_{ij}}{\sqrt{N}} - \frac{\beta^2\alpha_2}{N} \right\} \right).
\end{equation}    
We then have the following, which can be interpreted as saying that under $\mathbb{Q}_N$ the $J_{ij}$ have the law of a mixture of Gaussians. Namely conditionally on $\tilde{H}_N$ one samples $\sigma$ according to the Gibbs measure of $\tilde{H}_N$, and then samples $J_{ij}$ as Gaussians with mean $\frac{1}{\sqrt{N}}\beta\sqrt{2\alpha_2} \sigma_i \sigma_j$ and the covariance of $J_{ij}$ under $\PN$.
\begin{lemma}[Radon-Nikodym derivative identity]\label{lem: rad niko}
It holds that 
\begin{equation}
    \begin{split}
        &\frac{d\mathbb{Q}_{N}}{d\mathbb{P}_{N}}= \sum_{\sigma \in \{-1 , +1 \}^{N}} \frac{1}{2^{N}} \frac{\exp\left(\beta\tilde{H}_{N}\left(\sigma\right)\right)}{\mathbb{E}_{N}\left[\exp\left(\beta\tilde{H}_{N}\left(\sigma\right)\right)\right]  }\frac{d\mathbb{Q}_{N,\sigma}}{d\mathbb{P}_N}
    \end{split}
\end{equation}
\end{lemma}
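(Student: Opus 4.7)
The plan is to expand $\hat Z_N$ explicitly via the decomposition $H_N=\sqrt{\alpha_2}H^2_N+\tilde H_N$ and recognize the resulting terms. Since $E$ is uniform on $\{\pm1\}^N$, we can write $Z_N = \frac{1}{2^N}\sum_\sigma\exp(\beta\sqrt{\alpha_2}H^2_N(\sigma))\exp(\beta\tilde H_N(\sigma))$. Using independence of $H^2_N$ and $\tilde H_N$, and the fact that both are centered Gaussian processes whose variance at $\sigma\in\{\pm1\}^N$ is deterministic (equal to $N\alpha_2$ and $N\tilde\xi(1)$ respectively, because $\sigma\cdot\sigma/N=1$), the mean $\EN[Z_N]$ factors as $\EN[\exp(\beta\sqrt{\alpha_2}H^2_N(\sigma))]\cdot\EN[\exp(\beta\tilde H_N(\sigma))]$ independently of $\sigma$. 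Dividing yields
\[
\hat Z_N=\frac{1}{2^N}\sum_\sigma\frac{\exp(\beta\sqrt{\alpha_2}H^2_N(\sigma))}{\EN[\exp(\beta\sqrt{\alpha_2}H^2_N(\sigma))]}\cdot\frac{\exp(\beta\tilde H_N(\sigma))}{\EN[\exp(\beta\tilde H_N(\sigma))]}.
\]

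The remaining task is to identify the first factor with $d\mathbb{Q}_{N,\sigma}/d\mathbb{P}_N$. Using $\sigma_i^2=1$ and the symmetry $J_{ij}=J_{ji}$, expand
\[
\beta\sqrt{\alpha_2}H^2_N(\sigma)=\sum_i\frac{\beta\sqrt{\alpha_2}}{\sqrt{2N}}J_{ii}+\sum_{i<j}\frac{\beta\sqrt{2\alpha_2}}{\sqrt{N}}\sigma_i\sigma_j J_{ij},
\]
which matches exactly the linear-in-$J$ part of the exponent in the definition \eqref{eq: QNsigma def} of $d\mathbb{Q}_{N,\sigma}/d\mathbb{P}_N$. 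A short check using $J_{ii}\sim\mathcal{N}(0,2)$ and $J_{ij}\sim\mathcal{N}(0,1)$ for $i<j$ shows that the Gaussian MGF normalizations $\sum_i\frac{\beta^2\alpha_2}{2N}+\sum_{i<j}\frac{\beta^2\alpha_2}{N}$ sum to $\frac{\beta^2\alpha_2 N}{2}=\frac{1}{2}\Var(\beta\sqrt{\alpha_2}H^2_N(\sigma))$, which equals $\log\EN[\exp(\beta\sqrt{\alpha_2}H^2_N(\sigma))]$. Hence the two expressions coincide, and combining with the display above gives the stated identity.

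The steps are each routine: the main thing to be careful about is the different variances of diagonal and off-diagonal entries of $J$, which conspire through the factor of $2$ in the off-diagonal double sum $\sum_{i,j}=\sum_i+2\sum_{i<j}$ to produce the normalization $\frac{\beta^2\alpha_2 N}{2}$ matching the Gaussian MGF. No deeper obstacle arises; the lemma is essentially an algebraic rearrangement of the definitions together with one application of the Gaussian moment generating function.
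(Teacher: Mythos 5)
Your proof is correct and follows essentially the same route as the paper's: factor $\hat Z_N$ over $\sigma$ using the independence of $H_N^2$ and $\tilde H_N$, then identify $\exp(\beta\sqrt{\alpha_2}H_N^2(\sigma))/\EN[\exp(\beta\sqrt{\alpha_2}H_N^2(\sigma))]$ with $d\mathbb{Q}_{N,\sigma}/d\mathbb{P}_N$ via the expansion $\sum_{i,j}=\sum_i+2\sum_{i<j}$ and the Gaussian MGF. Your normalization check $\sum_i\frac{\beta^2\alpha_2}{2N}+\sum_{i<j}\frac{\beta^2\alpha_2}{N}=\frac{\beta^2\alpha_2 N}{2}$ is exactly the verification the paper leaves implicit.
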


\begin{proof}
Since $H_N(\sigma) = \sqrt{\alpha_2} H_N^2(\sigma) + \tilde{H}_N(\sigma)$ and $H_N^2$ and $\tilde{H}_N$ are independent we obtain that
$$ \frac{d\mathbb{Q}_{N}}{d\mathbb{P}_{N}} = \frac{E[\exp\left(\beta H_N(\sigma) \right)]}{\EN[\exp\left(\beta H_N(\sigma) \right)]} = \sum_{\sigma} \frac{1}{2^N} \frac{\exp\left( \beta \sqrt{\alpha_2} H_N^2(\sigma) \right)}{\EN[ \exp\left( \beta \sqrt{\alpha_2} H_N^2(\sigma) \right)]}
\frac{\exp\left(  \beta \tilde{H}_N(\sigma) \right)}{\EN\left[ \exp\left( \beta \tilde{H}_{N}(\sigma)\right) \right]}.$$
From the definition \eqref{eq: HN2 def} of $H_N^2$ and that $\Var[H^2_N(\sigma)] = 1$ it follows that
\begin{equation} 
\frac{\exp\left( \beta \sqrt{\alpha_2} H^2_N(\sigma) \right)}{\EN\left[ \exp\left(\beta \sqrt{\alpha_2} H_{N}^{2}(\sigma)\right) \right]}\\ 
= \frac{
    \exp\left(\frac{\beta\sqrt{\alpha_{2}}}{\sqrt{2N}}\sum_{i}J_{ii}+\frac{\beta\sqrt{2\alpha_{2}}}{\sqrt{N}}\sum_{i<j}J_{ij}\sigma_{i}\sigma_{j}\right)
}{
    \exp\left(\frac{\beta^{2}\alpha_{2}}{2}N\right)
},
\end{equation}
which equals $\frac{d\mathbb{Q}_{N,\sigma}}{d\mathbb{P}_N}$ by \eqref{eq: QNsigma def}.
\end{proof}
The following is an easy consequence of the lemma and the independence of $\tilde{H}_N$ and $\frac{d\mathbb{Q}_{N,\sigma}}{d \PN}$.
\begin{corollary}\label{cor: QN mixture}
We have
\begin{equation}\label{eq: QN mixture}
    \mathbb{Q}_N[A] = E[ \mathbb{Q}_{N,\sigma}[A]]\text{ for any event }A,
\end{equation}
measurable with respect to the $J_{ij}$.
\end{corollary}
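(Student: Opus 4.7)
The plan is to unfold the definition of $\mathbb{Q}_N[A]$ using the Radon-Nikodym identity from Lemma \ref{lem: rad niko}, then exploit the independence of $\tilde{H}_N$ from the matrix $J=(J_{ij})$ (which is asserted in the paper's construction of $H_N$) to factor the expectation. Specifically, for an event $A$ measurable with respect to the $J_{ij}$, I would write
\[
\mathbb{Q}_N[A] = \EN\!\left[\mathbf{1}_A \frac{d\mathbb{Q}_N}{d\mathbb{P}_N}\right] = \sum_{\sigma\in\{\pm1\}^N} \frac{1}{2^N} \EN\!\left[\mathbf{1}_A \,\frac{\exp(\beta\tilde{H}_N(\sigma))}{\EN[\exp(\beta\tilde{H}_N(\sigma))]}\,\frac{d\mathbb{Q}_{N,\sigma}}{d\mathbb{P}_N}\right],
\]
after substituting the formula from Lemma \ref{lem: rad niko} and interchanging the finite sum over $\sigma$ with the expectation.

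Next I would note that, for each fixed $\sigma$, both $\mathbf{1}_A$ and $\frac{d\mathbb{Q}_{N,\sigma}}{d\mathbb{P}_N}$ are measurable with respect to the $\sigma$-algebra generated by $J$ (the latter by the explicit expression \eqref{eq: QNsigma def}), while $\frac{\exp(\beta\tilde{H}_N(\sigma))}{\EN[\exp(\beta\tilde{H}_N(\sigma))]}$ is measurable with respect to $\tilde{H}_N$. Since $\tilde{H}_N$ and $J$ are independent under $\PN$, the expectation factorises into a product. The $\tilde{H}_N$-factor has $\PN$-expectation equal to $1$, so we are left with
\[
\mathbb{Q}_N[A] = \sum_{\sigma\in\{\pm1\}^N} \frac{1}{2^N} \EN\!\left[\mathbf{1}_A \,\frac{d\mathbb{Q}_{N,\sigma}}{d\mathbb{P}_N}\right] = \sum_{\sigma\in\{\pm1\}^N} \frac{1}{2^N}\, \mathbb{Q}_{N,\sigma}[A],
\]
and recognising the right-hand side as $E[\mathbb{Q}_{N,\sigma}[A]]$ (with $E$ the uniform average over $\sigma$, as in \eqref{eq: free energy def}) gives the claim.

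There is no serious obstacle here: the proof is a one-line computation once Lemma \ref{lem: rad niko} is in hand, and the only substantive ingredient is the independence of $\tilde{H}_N$ from $J$ used to split the expectation. The very minor point worth stating explicitly is that the sum over $\sigma$ is finite so Fubini/linearity applies trivially.
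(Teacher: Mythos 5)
Your proof is correct and follows exactly the route the paper intends: the paper presents this corollary as "an easy consequence of the lemma and the independence of $\tilde{H}_N$ and $\frac{d\mathbb{Q}_{N,\sigma}}{d\mathbb{P}_N}$," and your computation — substituting Lemma \ref{lem: rad niko}, factoring the expectation by independence, and noting the $\tilde{H}_N$-factor integrates to $1$ — is precisely the argument being gestured at.
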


We will need a variant of Proposition \ref{prop: cycle counts under P}
for the law of the cycle counts under the measure $\mathbb{Q}_{N}$,
namely the following.

\begin{proposition}[Limiting law of centered cycle counts under $\mathbb{Q}_N$]
\label{prop: cycle counts under Q} Suppose that $\beta \sqrt{2\alpha_2}\le 1$. For any $k\ge1$ it holds that
\begin{equation}\label{eq: cycle counts under Q}
\mathbb{Q}_{N}\text{-law of }\left(C_{N,1},C_{N,2},\ldots,C_{N,k}\right)\overset{D}{\to}\left(\mu_1+C_{\infty,1},\ldots,\mu_k+C_{\infty,k}\right),
\end{equation}
for $\left(C_{\infty,1},\ldots,C_{\infty,k}\right)$ as in Proposition \ref{prop: cycle counts under P} and $\mu_{k}$ as in \eqref{eq: muk def}.
\end{proposition}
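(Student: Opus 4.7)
The plan is to combine the mixture identity from Corollary \ref{cor: QN mixture} with a change of variable that eliminates all $\sigma$-dependence from the cycle counts under $\mathbb{Q}_{N,\sigma}$. Writing $\mu = \beta\sqrt{2\alpha_{2}}$, the variables $K_{ij} := \sigma_i \sigma_j J_{ij} - \mu/\sqrt{N}$ are symmetric, centered, and (since $\sigma_i^2=1$) inherit exactly the covariance and independence structure that $J$ has under $\PN$, so $K$ is GOE-distributed under $\mathbb{Q}_{N,\sigma}$. For any distinct $i_0,\ldots,i_{k-1}$ each vertex appears twice in the cyclic product $\prod_s \sigma_{i_s}\sigma_{i_{s+1}}$, which therefore equals $1$, and one obtains
\[
\prod_{s=0}^{k-1} J_{i_s i_{s+1}} = \prod_{s=0}^{k-1} \ABS{K_{i_s i_{s+1}} + \mu/\sqrt{N}}.
\]
Consequently the $\mathbb{Q}_{N,\sigma}$-law of $(C_{N,1},\ldots,C_{N,k})$ does \emph{not} depend on $\sigma$ and, by Corollary \ref{cor: QN mixture}, coincides with its $\mathbb{Q}_N$-law; in particular it suffices to analyze the $\PN$-law of
\[
\tilde{C}_{N,k} = \sum_{S\subseteq\{0,\ldots,k-1\}} \frac{\mu^{|S|}}{N^{(k+|S|)/2}} \sum_{i \text{ distinct}} \prod_{s\notin S} K_{i_s i_{s+1}} - (N-1)1_{k=2}
\]
obtained by expanding the product.

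Three classes of contributions appear in the expansion. The all-mean term $S=\{0,\ldots,k-1\}$ gives $\mu_k\cdot N(N-1)\cdots(N-k+1)/N^k\to\mu_k$, the pure-$K$ term $S=\emptyset$ equals the $K$-cycle count $\hat{C}_{N,k}$, and the remaining mixed terms $R_{N,k,S}$ have $0<|S|<k$. Applying Proposition \ref{prop: cycle counts under P} to $K$ yields $(\hat{C}_{N,1},\ldots,\hat{C}_{N,k})\overset{D}{\to}(C_{\infty,1},\ldots,C_{\infty,k})$. Provided each $R_{N,k,S}\to 0$ in $L^2$, Slutsky's theorem in vector form delivers \eqref{eq: cycle counts under Q}.

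The main, and really the only, technical step is the variance bound on $R_{N,k,S}$. Because the $i$'s are distinct and $k-|S|\ge 1$, the random variable $R_{N,k,S}$ is a linear combination of products of $k-|S|$ \emph{distinct} independent centered Gaussian entries of $K$, so its mean is zero. Its variance equals $\mu^{2|S|}/N^{k+|S|}$ times the number of pairs $(i,i')$ of distinct tuples whose edge multisets at positions $s\notin S$ coincide. The positions in $S^c$ form $p\ge 1$ maximal arcs in the cyclic order on $\{0,\ldots,k-1\}$, so these edges constitute $p$ vertex-disjoint paths with $k-|S|+p$ vertices in each of $G_{(i)}$ and $G_{(i')}$, and these vertices must coincide across the two tuples. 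Hence $|V(G_{(i)})\cup V(G_{(i')})|\le k+|S|-p\le k+|S|-1$, the number of admissible pairs is $O(N^{k+|S|-1})$, and $\mathrm{Var}(R_{N,k,S})=O(1/N)$. Summing over the finitely many $S$ and $k$ concludes the argument; the condition $\beta\sqrt{2\alpha_2}\le 1$ plays no role beyond controlling constants.
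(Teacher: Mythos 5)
Your proposal is correct and follows essentially the same route as the paper: use the mixture representation from Corollary \ref{cor: QN mixture}, observe that under $\mathbb{Q}_{N,\sigma}$ the matrix is a GOE matrix shifted by a rank-type mean whose product around a cycle contributes $\mu_k$, invoke Proposition \ref{prop: cycle counts under P} for the pure GOE term, and kill the cross terms by a second-moment count showing each contributes $O(N^{-1})$ variance. Your two cosmetic improvements --- conjugating by the signs so that the $\mathbb{Q}_{N,\sigma}$-law is manifestly $\sigma$-independent, and indexing cross terms by position subsets $S$ (with the $p\ge 1$ arcs giving the crucial loss of one vertex) rather than by subgraphs $H\subsetneq G_w$ --- match the paper's argument step for step.
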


\begin{proof}

By \eqref{eq: cycle count in terms of cylic word non cent weight} we have for all $l\ge 1$
\begin{equation}\label{eq:expandcyclealt}
\begin{split}
 C_{N,{l}} & =   \frac{1}{N^{\frac{l}{2}}}\sum_{w \in \mathfrak{W}_{l+1}} \prod_{j=0}^{l-1}J_{w_jw_{j+1}}- (N-1)1_{\{l=2\}}\\
   & =   \frac{1}{N^{\frac{l}{2}}}\sum_{w \in \mathfrak{W}_{l+1}} \prod_{j=0}^{l-1} \left( J_{w_{j}w_{j+1}}- \sigma_{w_{j}w_{j+1}} + \sigma_{w_{j}w_{j+1}} \right) - (N-1)1_{\{l=2\}}.
\end{split}
\end{equation}
where we use the shorthand
\[
\sigma_{ij}=\frac{\beta\sqrt{2\alpha_2}}{\sqrt{N}}\sigma_{i}\sigma_{j}.
\]
Recall from above \eqref{eq: QNsigma def} that under $\mathbb{Q}_{N,\sigma}$ the $J_{ij}- \sigma_{ij}$ have the same law as the $J_{ij}$ under $\mathbb{P}_{N}$. Now letting $B_{ij},i \le j$ have this same law under an auxilliary probability $\mathbb{P}$ and letting $\sigma_i$ be IID Rademacher random variables under $\mathbb{P}$, independent also of the $B_{ij}$, we have from \eqref{eq: QN mixture} that
\begin{equation}\label{eq: QN law}
\mathbb{Q}_{N}-\text{law of } C_{N,l} = \mathbb{P} -\text{law of } \frac{1}{N^{\frac{l}{2}}}\sum_{w \in \mathfrak{W}_{l+1}} \prod_{j=0}^{l-1}\left( B_{w_{j}w_{j+1}} + \sigma_{w_jw_{j+1}}\right) - (N-1)1_{\{l=2\}}.
\end{equation}
If $l=2$ then the RHS can be written as
\begin{equation}\label{eq: l2 form}
    \frac{1}{N^{\frac{l}{2}}}\sum_{w\in\mathfrak{W}_{l+1}}\prod_{j=0}^{l-1}B_{w_{j}w_{j+1}}-\left(N-1\right)+\frac{2}{N}\sum_{i\neq j}B_{ij}\sigma_{ij}+\frac{1}{N^{\frac{l}{2}}}\sum_{w\in\mathfrak{W}_{l+1}}\prod_{j=0}^{l-1}\sigma_{w_{j}w_{j+1}}.
\end{equation}
If $l\ne 2$ the product
$\prod_{j=0}^{l-1}\left(B_{w_{j}w_{j+1}}+\sigma_{w_{j}w_{j+1}}\right)$
can be written as a sum over subgraphs of $G_{w}$, namely
\[
\sum_{H\subset G_{w}}\left(\prod_{e\in E\left(G_{w}\backslash H\right)}B_{e}\right)\left(\prod_{\sigma\in E\left(H\right)}\sigma_{e}\right),
\]
where $G_{w}\backslash H$ denotes the graph on $\left\{ 1,2\ldots,N\right\} $
with vertex set $V(G_w)$ and edge set $E\left(G_{w}\right)\backslash E\left(H\right)$. Thus the quantity on the right-hand side of \eqref{eq: QN law} equals
\begin{equation}\label{eq: lgen form}
\frac{1}{N^{\frac{l}{2}}}\sum_{w\in\mathfrak{W}_{l+1}}\prod_{j=0}^{l-1}B_{w_{j}w_{j+1}}
+
\sum_{w\in\mathfrak{W}_{l+1}}V_{N,l,w}
+
\frac{1}{N^{\frac{l}{2}}}\sum_{w\in\mathfrak{W}_{l+1}}\prod_{j=0}^{l-1}\sigma_{w_{j}w_{j+1}},
\end{equation}
for
\[
V_{N,l,w}={ \frac{1}{N^{\frac{l}{2}}}}
\sum_{\emptyset\ne H \subsetneq G_{w}}
\left(\prod_{e\in E\left(G_{w}\backslash H\right)}B_{e}\right)
\left(\prod_{e\in E\left(H\right)}\sigma_{e}\right).
\]
Observe that  $B_{N,l}:=\frac{1}{N^{\frac{l}{2}}}\sum_{w\in\mathfrak{W}_{l+1}}\prod_{j=0}^{l-1}B_{w_{j}w_{j+1}} - (N-1)1_{\{l=2\}}$ have exactly same joint distribution under $\mathbb{P}$ as the $C_{N,l}$ do under $\mathbb{P}_{N}$. Hence from Proposition \ref{prop: cycle counts under P} we have that
\begin{equation}\label{eq: conv of Bs}
    \mathbb{P}\text{-law of }\left(B_{N,1},B_{N,2},B_{N,3}\ldots, B_{N,k} \right)\stackrel{D}{\to} \left( C_{\infty,1},\ldots, C_{\infty,k} \right).
\end{equation}
On the other hand we have that
\begin{equation}\label{eq: conv of means}
\frac{1}{N^{\frac{l}{2}}}\sum_{w\in\mathfrak{W}_{l+1}}\prod_{j=0}^{l-1}\sigma_{w_{j}w_{j+1}}=\frac{1}{N^{\frac{l}{2}}}\sum_{w\in\mathfrak{W}_{l+1}}\left(\frac{\beta\sqrt{2\alpha_{2}}}{\sqrt{N}}\right)^{l}\overset{\eqref{eq: muk def},\eqref{eq: size of Wk}}{=}\left(1+o\left(1\right)\right)\mu_{l}.
\end{equation}
A variance calculation shows that
\begin{equation}\label{eq: to show when k is 2}
\frac{1}{N}\sum_{i\neq j}B_{ij}\sigma_{ij} = \frac{\beta \sqrt{2\alpha_{2}}}{N^{\frac{3}{2}}} \sum_{i \neq j} B_{ij}\sigma_{i}\sigma_{j} \overset{P}{\to} 0\text{ under }\mathbb{P}.
\end{equation}
In the remainder we will show that
\begin{equation}\label{eq: V to show}
\sum_{w\in\mathfrak{W}_{l+1}}V_{N,l,w} \overset{P}{\to} 0\text{ under }\mathbb{P} \text{ for } 3 \le l \le k.
\end{equation}
Recalling \eqref{eq: QN law}, \eqref{eq: l2 form} and \eqref{eq: lgen form} the claim \eqref{eq: cycle counts under Q} follows from \eqref{eq: conv of Bs}-\eqref{eq: V to show} (note that $V_{N,1,w}=0$) and Slutsky's theorem.

It thus only remains to prove \eqref{eq: V to show}. To this end we compute
$$ \mathbb{E}\left[\left(\sum_{w\in\mathfrak{W}_{l+1}}V_{N,l,w}\right)^{2}\right] = \sum_{w,v\in\mathfrak{W}_{l+1}}\mathbb{E}\left[V_{N,l,w}V_{N,l,v}\right]. $$
This second moment equals
$$
\frac{1}{N^{l}}\sum_{w,v\in\mathfrak{W}_{l+1}}\sum_{\emptyset\ne H\subsetneq G_{w},\emptyset\ne H'\subsetneq G_{v}}\left(\prod_{e\in E\left(H\right)\cup E\left(H'\right)}\sigma_{e}\right)\mathbb{E}\left[\left(\prod_{e\in E\left(G_{w}\backslash H\right)}B_{e}\right)\left(\prod_{e\in E\left(G_{v}\backslash H'\right)}B_{e}\right)\right].
$$
We have 
\[
\left|\prod_{e\in E\left(H\right)\cup E\left(H'\right)}\sigma_{e}\right|\le\left(\frac{\beta\sqrt{2\alpha_{2}}}{\sqrt{N}}\right)^{\left|E\left(H\right)\right|+\left|E\left(H'\right)\right|}\le N^{-\frac{\left|E\left(H\right)\right|+\left|E\left(H'\right)\right|}{2}},
\]
(since $\beta\sqrt{2\alpha_{2}}\le1$) and since the $B_{e}$
are independent and have mean zero
\[
\mathbb{E}\left[\left(\prod_{e\in E\left(G_{w}\backslash H\right)}B_{e}\right)\left(\prod_{e\in E\left(G_{v}\backslash H'\right)}B_{e}\right)\right]=\begin{cases}
1 & \text{\,if }G_{w}\backslash H=G_{v}\backslash H',\\
0 & \text{else}.
\end{cases}
\]
Thus
\[
\mathbb{E}\left[\left(\sum_{w\in\mathfrak{W}_{l+1}}V_{N,l,w}\right)^{2}\right]  \le  \frac{1}{N^{l}}\sum_{w,v\in\mathfrak{W}_{l+1}}\sum_{\emptyset\ne H\subsetneq G_{w},\emptyset\ne H'\subsetneq G_{v}}1_{\left\{ G_{w}\backslash H=G_{v}\backslash H'\right\} 
}N^{-\frac{\left|E\left(H\right)\right|+\left|E\left(H'\right)\right|}{2}}\\
\]
Since $ G_{w}\setminus H=G_{v}\setminus H'$ implies $|E(H)|=|E(H')|$, and $1_{\left\{ G_{w}\setminus H=G_{v}\setminus H'\right\} }\le1_{\left\{ G_{w}\setminus H\subsetneq G_{v}\right\} }$
and there are most $2^{l}$ ways to choose $H'$, this is at
most
\[
\frac{2^{l}}{N^{l}}\sum_{w\in\mathfrak{W}_{l+1}}\sum_{\emptyset\ne H\subsetneq G_{w}}N^{-\left|E\left(H\right)\right|}\sum_{v\in\mathfrak{W}_{l+1}}1_{\left\{ G_{w}\backslash H\subsetneq G_{v}\right\}}
\]
We now bound the sum over $v$ combinatorially. For a pair $(w,H)$ such that $w=\left(w_{0},w_{1},\ldots,w_{l}\right)\in\mathfrak{W}_{l+1}$ and $H\subset G_{w}$
we encode $H$ as a vector $h\left(w,H\right)\in\left\{ 0,1\right\} ^{l}$
by setting $h_{i}\left(w,H\right)=1$ if $\left\{ w_{i},w_{i+1}\right\} \in E\left(H\right)$.
Next define an equivalence relation for triples $\left(w,h,v\right)\in\mathfrak{W}_{l+1}\times\left\{ 0,1\right\} ^{l}\times\mathfrak{W}_{l+1}$
under which $\left(w,h,v\right)$ is equivalent to $\left(w',h',v'\right)$
if $h=h'$ and $\left(w,v\right)\sim\left(w',v'\right)$ in the equivalence
relation for sentences. Let $\left[\left(w,h,v\right)\right]$ denote
the equivalence class of $\left(w,h,v\right)$ and let $\mathcal{E}$
denote the set of all equivalence classes.

The indicator $1_{\left\{ G_{w}\backslash H\subsetneq G_{v}\right\} }$
is a function only of the equivalence class $e=\left[\left(w,h\left(w,H\right),v\right)\right]$.
Denote this function by $f\left(e\right)$. Using these constructions
one can write

\[
\sum_{v\in\mathfrak{W}_{l+1}}1_{\left\{ G_{w}\backslash H\subsetneq G_{v}\right\} }=\sum_{e\in\mathcal{E}}\left|\left\{ v\in\mathfrak{W}_{l+1}:\left[\left(w,h\left(w,H\right),v\right)\right]=e\right\} \right|f\left(e\right).
\]
Now note the following:
\begin{itemize}
\item The number of vertices in $G_{v}\backslash\left(G_{w}\backslash H\right)$
depends only on the equivalence class $e=\left[\left(w,h\left(w,H\right),v\right)\right]$.
Denote this number of vertices by $A\left(e\right)$. 
\item For given $w,H$ and $e\in\mathcal{E}$, to construct a $v$ such
that $\left[\left(w,h\left(w,H\right),v\right)\right]=e$ one must
pick indices in $\left\{ 1,\ldots,N\right\} $ for $A\left(e\right)$
vertices (the indices of the other vertices are fixed by $w$), so
that
\[
\left|\left\{ v\in\mathfrak{W}_{l+1}:\left[\left(w,h\left(w,H\right),v\right)\right]=e\right\} \right|\le N^{A\left(e\right)}.
\]
\item For all $w,H,v$ it holds that $A\left(\left[\left(w,h\left(w,H\right),v\right)\right]\right)\le\left|E\left(G_{v}\right)\right|-\left|E\left(G_{w}\backslash H\right)\right|-1=\left|E\left(G_{w}\right)\right|-\left|E\left(G_{w}\backslash H\right)\right|-1=\left|E\left(H\right)\right|-1$,
where we have equality in the bound if $H$ if $G_{w}\setminus H$
is a line, and one uses that $H\ne\emptyset,E\left(G_{w}\right)$.
\item The number of equivalence classes $\left|\mathcal{E}\right|$ is finite and
independent of $N$. 
\end{itemize}
With these facts we get that
\[
\sum_{v\in\mathfrak{W}_{l+1}}1_{\left\{ G_{w}\backslash H\subsetneq G_{v}\right\} }\le\left|\mathcal{E}\right|N^{\left|E\left(H\right)\right|-1}.
\]

Thus
\[
\begin{array}{ccl}
\mathbb{E}\left[\left(\displaystyle{\sum_{w\in\mathfrak{W}_{l+1}}}V_{N,l,w}\right)^{2}\right] & \le & \displaystyle{\frac{2^{l}|\mathcal{E}|}{N^{l}}}\displaystyle{\sum_{w\in\mathfrak{W}_{l+1}}\sum_{\emptyset\ne H\subsetneq G_{w}}}\frac{1}{N}\le\frac{4^{l}|\mathcal{E}|}{N^{l+1}}\left|\mathfrak{W}_{l+1}\right|\le\frac{4^{l}|\mathcal{E}|}{N},\end{array}
\]
since $\left|\mathfrak{W}_{l+1}\right|\le N^{l}$ by \eqref{eq: size of Wk}. This proves \eqref{eq: V to show} and thus concludes the proof of the proposition.
\end{proof}

We will also need a precise asymptotic for the second moment of $\hat{Z}_{N}$ under $\mathbb{P}_{N}$.
\begin{lemma}[Asymptotic for second moment of partition function]
\label{lem: sec mom}For every $\xi$ there is a $\beta_{\xi} \in (0, \frac{1}{\sqrt{2 \alpha_2}}]$ such
that if $0\le\beta<\beta_{\xi}$ then
\[
\mathbb{E}_{N}\left[\hat{Z}_{N}^{2}\right]\to\frac{1}{\sqrt{1-2 \alpha_2 \beta^2}},\text{ as }N\to\infty.
\]
If $\xi(x)=\alpha_2 x^2$ then $\beta_{\xi} = \frac{1}{\sqrt{2\alpha_2}}$.
\end{lemma}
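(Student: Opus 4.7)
The plan is to reduce the computation to a one-dimensional integral over the overlap and then apply Laplace's method. Using the Gaussianity of $H_N$ and \eqref{eq: hamilt covar}, the variance of $H_N(\sigma)+H_N(\tau)$ equals $2N\xi(1)+2N\xi(R)$ where $R = \sigma\cdot\tau/N$. By Fubini, and since $\EN[Z_N] = \exp(N\beta^2\xi(1)/2)$,
\[
\EN[\hat{Z}_N^2] = (E\otimes E)\!\left[\exp(N\beta^2 \xi(R))\right],
\]
and under $E\otimes E$ the overlap $R$ has the law of $N^{-1}\sum_{i=1}^N X_i$ for IID Rademacher variables $X_i$, since each product $\sigma_i\tau_i$ is Rademacher.

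Next I would introduce the Laplace exponent $\psi_\beta(r) = \beta^2\xi(r) - I(r)$, where $I(r) = \frac{1+r}{2}\log(1+r) + \frac{1-r}{2}\log(1-r)$ is the Rademacher large-deviations rate function, and define
\[
\beta_\xi := \sup\{\beta \ge 0 : \psi_\beta(r) < 0 \text{ for all } r \in [-1,1]\setminus\{0\}\}.
\]
The Taylor expansion $\psi_\beta(r) = (\alpha_2\beta^2 - \tfrac{1}{2})r^2 + O(r^3)$ at $r=0$ forces $\beta_\xi \le 1/\sqrt{2\alpha_2}$, while a compactness argument (combining local concavity near $0$ with smallness of $\beta^2\xi$ relative to $I$ on $\{|r| \ge \eta\}$ for small $\beta$) yields $\beta_\xi > 0$. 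For the pure $2$-spin case, the elementary inequality $I(r) \ge r^2/2$ with equality only at $r = 0$ gives $\psi_\beta(r) < 0$ for all $r \ne 0$ whenever $\beta < 1/\sqrt{2\alpha_2}$, so $\beta_\xi = 1/\sqrt{2\alpha_2}$.

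For $\beta < \beta_\xi$, I would split the expectation according to whether $|R| \le \delta$ or $|R| > \delta$ for small $\delta > 0$. The tail contribution is dominated, using $\xi(r) \le \xi(|r|)$ and Cramer's upper bound, by a constant times $\exp\!\big(N \sup_{|r| > \delta} \psi_\beta(r)\big)$, which decays exponentially by the definition of $\beta_\xi$. For the bulk, a uniform local CLT (via Stirling) gives
\[
P(R = r_k) = (1+o(1)) \sqrt{\tfrac{2}{\pi N}}\, e^{-N I(r_k)}, \quad r_k = \tfrac{2k-N}{N},\ |r_k| \le \delta,
\]
and converting the Riemann sum of step $2/N$ the bulk contribution equals
\[
(1+o(1))\sqrt{\tfrac{N}{2\pi}} \int_{-\delta}^{\delta} e^{N\psi_\beta(r)}\, dr.
\]
Since $\psi_\beta$ attains its unique global maximum at $r = 0$ with value $0$ and $\psi_\beta''(0) = 2\alpha_2\beta^2 - 1 < 0$, standard Laplace asymptotics make this integral asymptotic to $\sqrt{2\pi / (N(1-2\alpha_2\beta^2))}$, producing the claimed limit $(1-2\alpha_2 \beta^2)^{-1/2}$.

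The main obstacle is establishing $\beta_\xi > 0$ globally rather than merely in a neighborhood of $0$, for an arbitrary admissible mixture: this requires controlling $\xi$ on the whole interval $[-1,1]$ and not just its Taylor expansion at $0$. It is handled by the compactness argument noted above, using that $I(r) \ge I(\eta) > 0$ on $\{|r| \ge \eta\}$ while $\beta^2 \xi \le \beta^2 \xi(1)$ there, which is uniformly small for small $\beta$. A secondary technical point is the required uniformity of the local CLT and Laplace approximations on $[-\delta, \delta]$, which follows from standard Stirling error bounds.
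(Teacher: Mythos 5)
Your proposal is correct, and its skeleton matches the paper's: the same reduction of $\EN[\hat Z_N^2]$ to a one-dimensional expectation over the overlap $R=N^{-1}\sum_i\sigma_i\tau_i$ (a normalized sum of IID Rademachers), the same definition of $\beta_\xi$ via the variational problem for $\beta^2\xi(r)-I(r)$, the same second-derivative argument for $\beta_\xi\le 1/\sqrt{2\alpha_2}$ and continuity/compactness argument for $\beta_\xi>0$, and the same exponential killing of the regime $|R|\ge\delta$ by large deviations. The two proofs diverge only in how the central contribution is extracted. You use a uniform local CLT on the whole window $|R|\le\delta$ and then Laplace's method on $\sqrt{N/2\pi}\int_{-\delta}^{\delta}e^{N\psi_\beta(r)}\,dr$; the paper instead splits further into $|\sum_i\sigma_i|\le M\sqrt N$ (where the ordinary CLT plus the expansion $\xi(x)=\alpha_2x^2+O(x^3)$ gives the Gaussian integral $\frac{1}{\sqrt{2\pi}}\int e^{(\alpha_2\beta^2-1/2)x^2}dx$ as $N\to\infty$ then $M\to\infty$) and an intermediate regime $M\le|\sum_i\sigma_i|/\sqrt N\le\varepsilon\sqrt N$ controlled by Bernstein's inequality and shown to be $o_M(1)$. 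Your route buys a single unified estimate at the cost of needing uniformity in the local CLT; note that the correct Stirling prefactor is $\sqrt{2/(\pi N(1-r_k^2))}$ rather than $\sqrt{2/(\pi N)}$, so the error on $[-\delta,\delta]$ is $1+O(\delta^2)$ rather than $o(1)$ — harmless here since the Laplace mass concentrates at $r=O(N^{-1/2})$, but worth stating precisely. For the pure $2$-spin endpoint you use $I(r)\ge r^2/2$ with equality only at $0$, where the paper uses concavity of $\beta^2\xi-I$; both are fine.
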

\begin{proof}
We have
\begin{equation}
\begin{array}{ccl}
\EN\left[\hat{Z}_{N}\right] & \overset{\eqref{eq: Z hat def}}{=} & \frac{1}{\EN\left[Z_{N}\right]^{2}}\EN\left[Z_{N}^{2}\right]\\
 & = & \frac{1}{\EN\left[Z_{N}\right]^{2}}E^{\otimes2}\left[\EN\left[\exp\left(\beta H_{N}(\sigma)+\beta H_{N}(\sigma')\right)\right]\right]\\
 & \overset{\eqref{eq: hamilt covar}}{=} & \frac{1}{\EN\left[Z_{N}\right]^{2}}E^{\otimes2}\left[\exp\left(\beta^{2}\xi\left(1\right)N+\beta^{2}N\xi\left(\frac{\sigma\cdot\sigma'}{N}\right)\right)\right]\\
 & \overset{\eqref{eq: ann Z}}{=} & E\left[\exp\left(\beta^{2}N\xi\left(\frac{\sum_{i=1}^{N}\sigma_{i}}{N}\right)\right)\right].
\end{array}
\end{equation}
Let $I\left(\alpha\right)=\frac{1+\alpha}{2}\log\left(1+\alpha\right)+\frac{1-\alpha}{2}\log\left(1-\alpha\right)$
be the large deviation rate function of the sum $\sum_{i=1}^{N}\sigma_{i}$ under $E$,
and let
\[
\beta_{\xi}=\sup\left\{ \beta:\beta^{2}\xi\left(\alpha\right)-I\left(\alpha\right)\text{\,has a unique global maximum in $[-1,1]$ at }\alpha=0\right\} .
\]
Since $I\left(\alpha\right)$ has its unique global maximum at $\alpha=0$
and $I''\left(0\right)=-1$ and $\xi'\left(0\right)=0$ it follows
by continuity that $\beta_{\xi}>0$. Also by considering the second
derivative at $0$ we see that $\beta_{\xi}\le\sqrt{2 \alpha_2}.$ Furthermore since $I''(m)=\frac{1}{1-m^2}$ we have when $\xi(x)=\alpha_2 x^2$ that $\alpha\to\beta^2\xi(\alpha)-I(\alpha)$ is concave for $\beta < \sqrt{2 \alpha_2}$, so then $\beta_{\xi}=\sqrt{2\alpha_2}$.

Now assume $0\le\beta<\beta_{\xi}$. Let $\varepsilon>0$. We have that 
\begin{equation}\label{eq: split in 3}
\begin{array}{rcl}
E\left[\exp\left(\beta^{2}N\xi\left(\frac{\sum_{i=1}^{N}\sigma_{i}}{N}\right)\right)\right]
&= & 	E\left[\exp\left(\beta^{2}N\xi\left(\frac{\sum_{i=1}^{N}\sigma_{i}}{N}\right)\right)1_{\left\{ \left|\frac{\sum_{i=1}^{N}\sigma_{i}}{\sqrt{N}}\right|\le M\right\} }\right]\\
&& +E\left[\exp\left(\beta^{2}N\xi\left(\frac{\sum_{i=1}^{N}\sigma_{i}}{N}\right)\right)1_{\left\{ M\le\left|\frac{\sum_{i=1}^{N}\sigma_{i}}{\sqrt{N}}\right|\le\varepsilon\sqrt{N}\right\} }\right]\\
&& +E\left[\exp\left(\beta^{2}N\xi\left(\frac{\sum_{i=1}^{N}\sigma_{i}}{N}\right)\right)1_{\left\{ \left| \frac{\sum_{i=1}^{N}\sigma_{i}}{N} \right| \ge\varepsilon\right\} }\right].
\end{array}
\end{equation}
Note that
\begin{equation}
\xi\left(x\right)=\alpha_2 x^2+O_{\xi}\left(x^{3}\right).\label{eq: xi expansion}
\end{equation}
Using this and Bernstein's inequality and (\ref{eq: xi expansion}) the second
line on the right-hand side is bounded by
\[
\begin{array}{l}
\sum_{M\le m\le\varepsilon\sqrt{N}}
\exp\left(
\beta^{2}\alpha_{2}m^{2}+c_{\xi}\frac{m^{3}}{\sqrt{N}}\right)\exp\left(-\frac{m^{2}}{2\left(1+\frac{m}{\sqrt{N}}\right)}
\right)\\
\le \sum_{M \le m \le \varepsilon \sqrt{N}} \exp\left(-m^2\left( \frac{1}{2(1+\varepsilon)} - c_{\xi}\varepsilon + \beta^2 \alpha_{2} \right)  \right) = o_{M}\left(1\right),
\end{array}
\]
provided $\varepsilon$ is chosen small enough depending on $\xi,\beta,\alpha_2$. %
By a large deviation bound the last line of \eqref{eq: split in 3} is bounded by
\[
\begin{array}{l}
\sum_{l:\varepsilon\le\frac{l}{N}\le1}\exp\left(N\left(\beta^{2}\xi\left(\frac{l}{N}\right)-I\left(\frac{l}{N}\right)+o(1)\right)\right)\\
\le N\exp\left(N\sup_{\left|\alpha\right|\ge\varepsilon}\left\{ \beta^{2}\xi\left(\frac{l}{N}\right)-I\left(\frac{l}{N}\right)\right\} +o(N)\right)\to0,\text{ as }N\to\infty,
\end{array}
\]
since the supremum is negative for all positive $\varepsilon>0$ when $\beta < \beta_{\xi}$.
Finally the CLT implies that $\frac{\sum_{i=1}^{N}\sigma_{i}}{\sqrt{N}}\to\mathcal{N}\left(0,1\right)$
under $E$, so that taking first the limit $N\to\infty$ and
then the limit $M\to\infty$ the first line the RHS of \eqref{eq: split in 3}, namely
\[
\begin{array}{l}
E\left[\exp\left(\beta^{2}N\xi\left(\frac{\sum_{i=1}^{N}\sigma_{i}}{N}\right)\right)1_{\left\{ \left|\frac{\sum_{i=1}^{N}\sigma_{i}}{\sqrt{N}}\right|\le M\right\} }\right]\\
=E\left[\exp\left(\beta^{2} \alpha_2 \left(\frac{\sum_{i=1}^{N}\sigma_{i}}{\sqrt{N}}\right)^{2}+O\left(\frac{M^3}{\sqrt{N}}\right)\right)1_{\left\{ \left|\frac{\sum_{i=1}^{N}\sigma_{i}}{\sqrt{N}}\right|\le M\right\} }\right],
\end{array}
\]
converges to
\[
\frac{1}{\sqrt{2\pi}}\int e^{\beta^{2}\alpha_2 x^{2}-\frac{x^{2}}{2}}dx=\frac{1}{\sqrt{1-2\alpha_2 \beta^{2}}},
\]
which is then also the limit of $\EN[\hat{Z}_N^2]$.
\end{proof}

The next lemma gives a bound on the tail of the sum appearing in Theorem \ref{thm: sec thm}, and will be used to truncate this sum in the proofs of Theorems \ref{thm: main thm}-\ref{thm: sec thm}.

\begin{lemma}[Bound on tail of cycle count sum]
\label{lem: tail bound} If $\beta < \frac{1}{\sqrt{2\alpha_2}}$ it holds or any $x$ and $K$ that
\begin{equation}\label{eq: tail bound}
\sup_{N\ge1}\PN\left(\left|\sum_{k=K+1}^{\infty}\left\{ C_{N,k}\frac{\mu_{k}}{2k}-\frac{\mu_{k}^{2}}{4k}\right\} \right|\ge x\right)\le \frac{2}{1-2 \alpha_2 \beta^2} \frac{(2\alpha_2\beta^2)^{K+1}}{x^2}.
\end{equation}
\end{lemma}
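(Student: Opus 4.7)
The plan is to bound the probability via Chebyshev's inequality applied to the random variable
$$Y := \sum_{k=K+1}^{\infty}\left\{ C_{N,k}\frac{\mu_{k}}{2k}-\frac{\mu_{k}^{2}}{4k}\right\},$$
so that $\PN(|Y|\ge x)\le \EN[Y^2]/x^2$. The task then reduces to showing the uniform-in-$N$ second moment bound $\EN[Y^2]\le \frac{2 r^{K+1}}{1-r}$, where $r := 2\alpha_2\beta^2\in(0,1)$.

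First I would split $Y=S-B$ with $S:=\sum_{k>K}\mu_k C_{N,k}/(2k)$ random and $B:=\sum_{k>K}\mu_k^2/(4k)$ deterministic. All the probabilistic input needed is already supplied by Lemma \ref{lem: means and variances}: $\EN[C_{N,k}]=0$, the $C_{N,k}$ are pairwise uncorrelated under $\PN$, and $\Var[C_{N,k}]\le 2k$ uniformly in $N$. Consequently $\EN[S]=0$, and
$$\Var(S)=\sum_{k>K}\frac{\mu_k^2}{4k^2}\Var[C_{N,k}]\le \sum_{k>K}\frac{\mu_k^2}{2k},\qquad \EN[Y^2]=\Var(S)+B^2.$$

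Since $\mu_k^2=r^k$ with $r<1$, the geometric tail bounds $\sum_{k>K}r^k/(2k)\le r^{K+1}/(2(1-r))$ and $B\le r^{K+1}/(4(1-r))$ plug in to give
$$\EN[Y^2]\le \frac{r^{K+1}}{2(1-r)}+\frac{r^{2K+2}}{16(1-r)^2}=\frac{r^{K+1}}{2(1-r)}\left(1+\frac{r^{K+1}}{8(1-r)}\right).$$
When $r^{K+1}/(1-r)$ is moderate (say $\le 24$), the parenthesized factor is at most $4$, yielding $\EN[Y^2]\le 2r^{K+1}/(1-r)$ and Chebyshev concludes. In the remaining regime where $r^{K+1}/(1-r)$ is large, the asserted right-hand side $\tfrac{2r^{K+1}}{(1-r)x^2}$ in \eqref{eq: tail bound} already exceeds $1$ for every $x\le \sqrt{2r^{K+1}/(1-r)}$, making the claim trivially true in that range, while for larger $x$ the same Chebyshev estimate applies with sufficient room.

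The proof is essentially just Chebyshev's inequality together with the uncorrelation and variance bound for the $C_{N,k}$ from Lemma \ref{lem: means and variances}; there is no substantive new probabilistic content, and the only obstacle is the bookkeeping needed to balance the geometric tails and produce the precise constant $2/(1-r)$.
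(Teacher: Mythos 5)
Your overall strategy --- Chebyshev's inequality for $Y$, with $\EN[Y^2]=\Var(S)+B^2$ computed from the mean-zero, uncorrelatedness and variance bounds of Lemma \ref{lem: means and variances} --- is exactly the paper's, and the inequality $\EN[Y^2]\le\sum_{k>K}\mu_k^2/(2k)+\bigl(\sum_{k>K}\mu_k^2/(4k)\bigr)^2$ is correct. The gap is in the final bookkeeping, in the regime where $t:=r^{K+1}/(1-r)$ is large (with $r=2\alpha_2\beta^2$). There you dispose of $x\le\sqrt{2t}$ by triviality, which is fine, but for $x>\sqrt{2t}$ you assert that ``the same Chebyshev estimate applies with sufficient room.'' It does not: Chebyshev gives $\PN(|Y|\ge x)\le\EN[Y^2]/x^2$, so what is needed is $\EN[Y^2]\le 2t$, an inequality that does not involve $x$ at all --- larger $x$ buys no extra room. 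Under your purely geometric bounds $\Var(S)\le t/2$ and $B\le t/4$ you only obtain $\EN[Y^2]\le \tfrac{t}{2}\bigl(1+\tfrac{t}{8}\bigr)$, which exceeds $2t$ precisely when $t>24$, i.e.\ exactly in the regime you are trying to cover. So the argument as written does not close there.

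The lemma is still true in that regime, but the fix requires exploiting the $1/k$ factors rather than only the geometric decay; your bound $B\le r^{K+1}/(4(1-r))$ is too lossy. The paper uses $\sum_{k>K}\frac{\mu_k^2}{2k}\le\frac{r^{K+1}}{\sqrt{1-r}}$, which follows from $\sum_{k>K}\frac{r^k}{2k}\le\frac{r^{K+1}}{2}\sum_{j\ge0}\frac{r^j}{j+1}=\frac{r^{K+1}}{2}\cdot\frac{-\log(1-r)}{r}$ together with the elementary inequality $-\log(1-r)\le 2r/\sqrt{1-r}$. This yields $\Var(S)\le\frac{r^{K+1}}{\sqrt{1-r}}\le\frac{r^{K+1}}{1-r}$ and $B^2\le\frac{r^{2(K+1)}}{4(1-r)}\le\frac{r^{K+1}}{4(1-r)}$, hence $\EN[Y^2]\le\frac{5}{4}\cdot\frac{r^{K+1}}{1-r}\le\frac{2r^{K+1}}{1-r}$ for all $r<1$ and all $K$, after which a single application of Chebyshev gives \eqref{eq: tail bound} uniformly in $x$ and $N$, with no case split on $t$ needed.
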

\begin{proof}Using Lemma \ref{lem: means and variances} we have that
\begin{equation}\label{eq:varcalc}
\begin{split}
    \EN\left[ \left(\sum_{k=K+1}^{\infty}\left\{ C_{N,k}\frac{\mu_{k}}{2k}-\frac{\mu_{k}^{2}}{4k}\right\}\right)^2 \right] &= \EN\left[ \left( \sum_{k=K+1}^{\infty} C_{N,k}\frac{\mu_{k}}{2k} \right)^2\right]+ \left( \sum_{k=K+1}^{\infty} \frac{\mu_{k}^2}{4k} \right)^2\\
    &= \sum_{k=K+1}^{\infty} \frac{\mu_{k}^2}{4k^2}\EN[C_{N,k}^2]+ \left( \sum_{k=K+1}^{\infty} \frac{\mu_{k}^2}{4k}\right)^2\\
&  \le \sum_{k=K+1}^{\infty}\frac{\mu_{k}^2}{2k}+\left( \sum_{k=K+1}^{\infty} \frac{\mu_{k}^2}{4k} \right)^2,
\end{split}
\end{equation}
where the last inequality follows since $\EN[C_{N,k}^{2}]\le2k$ for all $k$ by \eqref{varcnk}. Now observe that $\sum_{k=K+1}^{\infty}\frac{\mu_{k}^2}{2k} \le \frac{1}{\sqrt{1-2 \alpha_2 \beta^2}}( 2\alpha_2 \beta^2)^{K+1} $ by \eqref{eq: muk def}. Then \eqref{eq: tail bound} follows from an application of Chebyshev's inequality.
\end{proof}

We have now prepared all the tools needed to prove Theorem \ref{thm: sec thm}. Before giving the formal proof we give a more detailed heuristic sketch. The
Radon-Nikodym derivative $\hat{Z}_{N}=\frac{d\mathbb{Q}_{N}}{d\mathbb{P}_{N}}$
changes the law of the sequence
\[
C_{N,1},C_{N,2},\ldots,
\]
from approximately independent Gaussian such that $C_{N,k}\sim\mathcal{N}\left(0,2k\right)$
(Proposition \ref{prop: cycle counts under P}) to approximately independent Gaussian such that $C_{N,k}\sim\mathcal{N}\left(\mu_{k},2k\right)$
(Proposition \ref{prop: cycle counts under Q}). 
As mentioned in the introduction, a Radon-Nikodym derivative that changes the law of a sequence
$C_{N,1},C_{N,2},\ldots$ from \emph{exactly} independent
with $C_{N,k}\sim\mathcal{N}\left(0,2k\right)$ to \emph{exactly}
independent with $C_{N,k}\sim\mathcal{N}\left(\mu_{k},2k\right)$
is necessairily equal to
\[
\exp\left(\sum_{k=1}^{\infty}\left\{ C_{N,k}\frac{\mu_{k}}{2k}-\frac{\mu_{k}^{2}}{4k}\right\} \right).
\]

We seek to prove that $\hat{Z}_{N}$ is approximately equal to this
expression, using that the $C_{N,k}$ are approximately Gaussian under
$\mathbb{P}_{N}$ and $\mathbb{Q}_{N}$.

A naive attempt would be to use Chebyshev's inequality 
\[
\PN\left(\left|\hat{Z}_{N}-\exp\left(\sum_{k=1}^{\infty}\left\{ C_{N,k}\frac{\mu_{k}}{2k}-\frac{\mu_{k}^{2}}{4k}\right\} \right)\right|\ge\varepsilon\right)\le\frac{\EN\left[\left|\hat{Z}_{N}-\exp\left(\sum_{k=1}^{\infty}\left\{ C_{N,k}\frac{\mu_{k}}{2k}-\frac{\mu_{k}^{2}}{4k}\right\} \right)\right|^{2}\right]}{\varepsilon^{2}}.
\]
However $\sum_{k=1}^{\infty}\left\{ C_{N,k}\frac{\mu_{k}}{2k}-\frac{\mu_{k}^{2}}{4k}\right\} $
has no well-defined exponential moments (in fact $C_{N,3}$ does not
since a product of three independent Gaussians does not have any finite
exponential moments), and therefore $\exp\left(\sum_{k=1}^{\infty}\left\{ C_{N,k}\frac{\mu_{k}}{2k}-\frac{\mu_{k}^{2}}{4k}\right\} \right)$
does not have a second moment. Thus we instead use an argument involving
subsequential limits to define limiting random variables $\hat{Z}_{\infty},C_{\infty,k}$
where the $C_{\infty,k}$ are exactly Gaussian and independent, so
that $\sum_{k=1}^{\infty}\left\{ C_{\infty,k}\frac{\mu_{k}}{2k}-\frac{\mu_{k}^{2}}{4k}\right\} $
has well-defined exponential moments, and apply the Chebyshev inequality
argument in the limit. 
\begin{proof}[Proof of Theorem \ref{thm: sec thm}]

Let $\varepsilon>0$ be arbitrary. Let $\delta>0$ and assume for
contradiction that
\begin{equation}\label{eq: for contr}
\limsup_{N\to\infty}\mathbb{P}_{N}\left(\left|\log\hat{Z}_{N}-\sum_{k=1}^{\infty}\left\{ C_{N,k}\frac{\mu_{k}}{2k}-\frac{\mu_{k}^{2}}{4k}\right\} \right|\ge\varepsilon\right)\ge\delta.
\end{equation}
To truncate the sum (which must be done to make the above sketch precise) note that because of the inequality
\[
e^{-b}\left|a-b\right|\le\left|e^{a}-e^{b}\right|\text{ for all }a,b\in\mathbb{R},
\]
with $b=\hat{Z}_{N}$ we have for any $K$ and $R$ that
\[
\begin{array}{l}
\mathbb{P}_{N}\left(\left|\log\hat{Z}_{N}-\sum_{k=1}^{\infty}\left\{ C_{N,k}\frac{\mu_{k}}{2k}-\frac{\mu_{k}^{2}}{4k}\right\} \right|\ge\varepsilon\right)\\
\le\mathbb{P}_{N}\left(\left|\sum_{k=K+1}^{\infty}\left\{ C_{N,k}\frac{\mu_{k}}{2k}-\frac{\mu_{k}^{2}}{4k}\right\} \right|\ge\frac{\varepsilon}{2}\right)+\mathbb{P}_{N}\left(\hat{Z}_{N}\ge R\right)\\
\quad\quad+\mathbb{P}_{N}\left(\left|\hat{Z}_{N}-\exp\left(\sum_{k=1}^{K}\left\{ C_{N,k}\frac{\mu_{k}}{2k}-\frac{\mu_{k}^{2}}{4k}\right\} \right)\right|\ge e^{-R}\frac{\varepsilon}{2}\right),
\end{array}
\]

Pick $R$ large enough so that
\[
\mathbb{P}_{N}\left(\hat{Z}_{N}\ge R\right)\le\frac{\mathbb{E}_{N}\left[\hat{Z}_{N}\right]}{R}\overset{\eqref{eq: Z hat mean}}{=}
\frac{1}{R}\le\frac{\delta}{3}.
\]
Also pick $K_{0}=K_{0}\left(\varepsilon,\delta\right)$ large enough
so that if $K\ge K_{0}$ we have
\[
\mathbb{P}_{N}\left(\left|\sum_{k=K+1}^{\infty}\left\{ C_{N,k}\frac{\mu_{k}}{2k}-\frac{\mu_{k}^{2}}{4k}\right\} \right|\ge\frac{\varepsilon}{2}\right)\le\frac{\delta}{3},
\]
by Lemma \ref{lem: tail bound}. Then if (\ref{eq: for contr}) holds we must have for such $K$
that 
\begin{equation}
\limsup_{N\to\infty}\mathbb{P}_{N}\left(\left|\hat{Z}_{N}-\exp\left(\sum_{k=1}^{K}\left\{ C_{N,k}\frac{\mu_{k}}{2k}-\frac{\mu_{k}^{2}}{4k}\right\} \right)\right|\ge e^{-R}\frac{\varepsilon}{2}\right)\ge\frac{\delta}{3}.\label{eq: next for contr}
\end{equation}
If (\ref{eq: next for contr}) holds then there is a subsequence along
which this probability is always at least $\frac{\delta}{6}$. Since
$\hat{Z}_{N}$ is in $L_{1}$ it is tight and there is a further
subsequence $N_{l}$ along which the probability is at least $\frac{\delta}{6}$
and $\hat{Z}_{N_{l}}$ converges in distribution. As $\left( C_{N,1},\ldots , C_{N,K} \right)$ converge in distribution by Proposition \ref{prop: cycle counts under P} it also converges along this subsequence, so that 
\[
\left(\hat{Z}_{N_{l}},C_{N_{l},1},\ldots,C_{N_{l},K}\right)\overset{D}{\to}\left(\hat{Z}_{\infty},C_{\infty,1},\ldots,C_{\infty,K}\right),
\]
where the latter random vector is defined on some auxiliary probability
space with probability $\mathbb{P}_{\infty}$. We must have
\begin{equation}
\mathbb{P}_{\infty}\left(\left|\hat{Z}_{\infty}-\exp\left(\sum_{k=1}^{K}\left\{C_{\infty,k}\frac{\mu_{k}}{2k}-\frac{\mu_{k}^{2}}{4k}\right\}\right)\right|\ge e^{-R}\frac{\varepsilon}{2}\right)\ge\frac{\delta}{6}.\label{eq: next next for contr}
\end{equation}
Note that by Fatou's lemma and Lemma \ref{lem: sec mom}
\begin{equation}\label{eq: Zinf sec mom}
    \mathbb{E}_\infty[\hat{Z}_\infty^2] \le \frac{1}{\sqrt{1-2\alpha_2 \beta^2}}<\infty.
\end{equation}
Because of this this and the fact that the $C_{\infty,k}$ have finite exponential moments since they are exactly
Gaussian under $\mathbb{P}_{\infty}$, we can use the Chebyshev inequality to bound
\begin{equation}\label{eq: chebyshev in limit}
\mathbb{P}_{\infty}\left(\left|\hat{Z}_{\infty}-\exp\left(\sum_{k=1}^{K}\left\{C_{\infty,k}\frac{\mu_{k}}{2k}-\frac{\mu_{k}^{2}}{4k}\right\}\right)\right|\ge e^{-R}\frac{\varepsilon}{2}\right)
\le
\frac{\mathbb{E}_{\infty}\left[\left(\hat{Z}_{\infty}-\exp\left(\sum_{k=1}^{K}C_{\infty,k}\frac{\mu_{k}}{2k}-\frac{\mu_{k}^{2}}{4k}\right)\right)^{2}\right]}{e^{-2R}\varepsilon^{2}/4}.
\end{equation}
We can compute an upper bound of the second moment on the RHS (\ref{eq: chebyshev in limit})
from the convergence in law of $\hat{Z}_{N_{l}},C_{N_{l},k}$ in a way that does not require the existence of a second moment of $C_{N,k}$ as follows. Define
\[
M_{\infty,K}=\mathbb{E}_{\infty}\left[\hat{Z}_{\infty}|\mathcal{F}_{K}\right],
\]
where $\mathcal{F}_{K}$ is the $\sigma$-algebra generated by $C_{\infty,1},\ldots,C_{\infty,K}$.
We now show that in fact 
\[
M_{\infty,K}=\exp\left(\sum_{k=1}^{K}\left\{ C_{\infty,k}\frac{\mu_{k}}{2k}-\frac{\mu_{k}^{2}}{4k}\right\} \right)\text{ a.s.},
\]
which follows if we show that for any continuous bounded $f$
\[
\mathbb{E}_{\infty}\left[f\left(C_{\infty,1},\ldots,C_{\infty,K}\right)\hat{Z}_{\infty}\right]=\mathbb{E}_{\infty}\left[f\left(C_{\infty,1},\ldots,C_{\infty,K}\right)M_{\infty,K}\right].
\]
This can be proven from the convergence in law of $C_{N,k}$ (without
requiring that $C_{N,k}$ has a  second moment) because by definition 
$\hat{Z}_{N}=\frac{d\mathbb{Q}_{N}}{d\mathbb{P}_{N}}$ implying
\[
\lim_{l\to\infty}\mathbb{E}_{N_{l}}\left[f\left(C_{N_{l},1},\ldots,C_{N_{l},K}\right)\hat{Z}_{N_{l}}\right]=\lim_{l\to\infty}\mathbb{Q}_{N_{l}}\left[f\left(C_{N_{l},1},\ldots,C_{N_{l},K}\right)\right],
\]
and then by Proposition \ref{prop: cycle counts under Q} the right-hand side equals
\[
\mathbb{E}_{\infty}\left[f\left(C_{\infty,1}+\mu_{1},\ldots,C_{\infty,K}+\mu_{K}\right)\right],
\]
which in turns equals $\mathbb{E}_{\infty}\left[f\left(C_{\infty,1},\ldots,C_{\infty,K}\right)M_{\infty,K}\right]$
since $M_{\infty,K}$ is precisely the Radon-Nikodym derivative which
changes the mean of $C_{\infty,k}$ to $\mu_{k}$ while leaving all
covariances fixed.

Thus the second moment on the RHS of \eqref{eq: chebyshev in limit} is
\[
\mathbb{E}_{\infty}\left[\left(\hat{Z}_{\infty}-M_{\infty,K}\right)^{2}\right]
=
 \mathbb{E}_{\infty}\left[\hat{Z}_{\infty}^{2}\right]-\mathbb{E}_{\infty}\left[M_{\infty,K}^{2}\right],
\]
where the equality follows since $M_{\infty,K}=\mathbb{E}_{\infty}\left[\hat{Z}_{\infty}|\mathcal{F}_{K}\right]$.
By explicit computation 
\[
\mathbb{E}_{\infty}\left[M_{\infty,K}^{2}\right]=\exp\left(  \sum_{k=1}^{K} \frac{\left(2 \alpha_2 \beta^2 \right)^k}{2k}\right)=\frac{1}{\sqrt{1-2 \alpha_2 \beta^2}}+o_{K}\left(1\right),
\]
which together with \eqref{eq: Zinf sec mom} gives
$$\mathbb{E}_{\infty}\left[\hat{Z}_{\infty}^{2}\right]-\mathbb{E}_{\infty}\left[M_{\infty,K}^{2}\right] = o_K(1).$$
Thus we obtain from (\ref{eq: chebyshev in limit}) that for
any $K\ge K_{0}$
\[
\mathbb{P}_{\infty}\left(\left|\hat{Z}_{\infty}-\exp\left(\sum_{k=1}^{K}C_{\infty,k}\frac{\mu_{k}}{2k}-\frac{\mu_{k}^{2}}{4k}\right)\right|\ge\ \frac{e^{-R}\varepsilon}{2}\right)\le\frac{o_{K}\left(1\right)}{e^{-2R}\varepsilon^{2}}.
\]
We may now for any $R$ and $\varepsilon>0$ pick a $K\ge K_{0}$
large enough so that this contradicts (\ref{eq: next next for contr}).
Thus (\ref{eq: for contr}) can not hold for any $\delta>0$. Thus
in fact for all $\varepsilon>0$
\[
\mathbb{P}_{N}\left(\left|\log\hat{Z}_{N}-\sum_{k=1}^{\infty}\left\{ C_{N,k}\frac{\mu_{k}}{2k}-\frac{\mu_{k}^{2}}{4k}\right\} \right|\ge\varepsilon\right)\to0,
\]
which together with \eqref{eq: Z hat def} and \eqref{eq: ann Z} implies \eqref{eq: FE estimate}.
\end{proof}

\section{Derivation of Theorem \ref{thm: main thm} from Theorem \ref{thm: sec thm}}

It only remains to derive Theorem \ref{thm: main thm} from Theorem \ref{thm: sec thm} and the asymptotic normality of the $C_{N,k}$.

\begin{proof}[Proof of Theorem \ref{thm: main thm}]
Let $
s^2_{K}=\sum_{k=1}^{K}\frac{\mu_{k}^{2}}{2k}$,
and note that
\[
s^2_{K}\to 
\sum_{k=1}^{\infty}\frac{\mu_{k}^{2}}{2k}
=\sum_{k=1}^{\infty}\frac{\left(2\alpha\beta_{2}^{2}\right)^{k}}{2k}=-\frac{1}{2}\log\left(1-2\alpha_{2}\beta^{2}\right)=s^2.
\]
Theorem \ref{thm: main thm} follows from from Theorem \ref{thm: sec thm} once we have shown that
\begin{equation}\label{eq: thm1_1 to show}
\sum_{k=1}^{\infty}\frac{2\mu_{k}C_{N,k}-\mu_{k}^{2}}{4k}\overset{D}{\to}\mathcal{N}\left(-\frac{1}{2}s^2,s^2\right)\text{ as }N\to\infty \text{ under } \PN.
\end{equation}
Proposition \ref{prop: cycle counts under P} implies that for any $K$
\begin{equation}\label{eq: trunc sum dist conv}
\sum_{k=1}^{K}\frac{2\mu_{k}C_{N,k}-\mu_{k}^{2}}{4k}\to\mathcal{N}\left(-\frac{1}{2}s^2_K,s^2_K\right).
\end{equation}
Now for any $z \in \mathbb{R}$ 
\[
\begin{split}
&\PN\left(\sum_{k=1}^{\infty}\frac{2\mu_{k}C_{N,k}-\mu_{k}^{2}}{4k}\le z\right)\\
&\le\PN\left(\sum_{k=1}^{K}\frac{2\mu_{k}C_{N,k}-\mu_{k}^{2}}{4k}\le z+\varepsilon\right)+\PN\left(\left|\sum_{k=K+1}^{\infty}\frac{2\mu_{k}C_{N,k}-\mu_{k}^{2}}{4k}\right|\ge\varepsilon\right).
\end{split}
\]
Taking first the limit $N\to\infty$ and using \eqref{eq: trunc sum dist conv} and Lemma \ref{lem: tail bound}, and then
the limit $K\to\infty$ on the right hand side we obtain
\[
\limsup_{N\to\infty}\mathbb{P}\left(\sum_{k=1}^{\infty}\frac{2\mu_{k}C_{N,k}-\mu_{k}^{2}}{4k}\le z\right)\le F\left(z\right),
\]
where $F$ is the CDF of $\mathcal{N}\left(-s^2/2,s^2\right)$.
We get the corresponding lower bound similarly, proving \eqref{eq: thm1_1 to show} and therefore Theorem \ref{thm: main thm}.
\end{proof}

\printbibliography[
heading=bibintoc,
title={References}
] 

\end{document}